\numberwithin{equation}{section}
\newtheorem{thm}[equation]{Theorem}
\newtheorem{conjecture}[equation]{Conjecture}
\newtheorem{cor}[equation]{Corollary}
\newtheorem{prop}[equation]{Proposition}
\newtheorem{defi}[equation]{Definition}
\theoremstyle{remark}
\newtheorem{remark}[equation]{Remark}
\newtheorem*{acknowledgments}{Acknowledgments}
\renewcommand{\bar}[1]{#1\llap{$\overline{\phantom{\rm#1}}$}}
\newcommand{\lra}{\longrightarrow}
\DeclareMathOperator{\tot}{{all}}
\DeclareMathOperator{\sep}{{sep}}
\DeclareMathOperator{\tor}{{tor}}
\DeclareMathOperator{\End}{{End}}
\DeclareMathOperator{\Gal}{{Gal}}
\DeclareMathOperator{\trdeg}{{trdeg}}
\newcommand{\N}{{\mathbb N}}
\newcommand{\A}{{\mathbf{A}}}
\newcommand{\Gammabar}{{\overline{\Gamma}}}
\newcommand{\F}{{\mathbb{F}}}
\newcommand{\Fbar}{{\overline{\F}}}
\newcommand{\Kbar}{{\overline{K}}}
\newcommand{\bG}{{\mathbb G}}
\newcommand{\OO}{{\mathcal O}}
\newcommand{\cV}{\mathcal{V}}
\newcommand{\isomto}{\overset{\sim}{\rightarrow}}
\newcommand{\tensor}{\otimes}
\newcommand{\Ga}{{{\mathbb G}_a}}
\newif\ifpdf
\begin{document}

%#######################################################################
%#######################################################################

% Declarations for Front Matter

\title[Ad\`{e}lic closure of a Drinfeld module]{Algebraic equations on the ad\`{e}lic closure of a Drinfeld module}

\author{Dragos Ghioca}

\address{
Dragos Ghioca \\
Department of Mathematics \\
University of British Columbia \\
1984 Mathematics Road\\
Vancouver, BC V6T 1Z2\\
Canada
}

\email{dghioca@math.ubc.ca}

\author{Thomas Scanlon}

\address{
Thomas Scanlon\\
Department of Mathematics\\
Evans Hall\\
University of California\\
Berkeley, CA 94720-3840\\
USA
}

\email{scanlon@math.berkeley.edu}

%#######################################################################
%#######################################################################

\begin{abstract}
Let $k$ be a field of positive characteristic and $K = k(V)$ a function field of a variety $V$ over $k$ and let $\A_K$ be a ring of ad\'{e}les of $K$ with respect to a cofinite set of the places on $K$ corresponding to the divisors on $V$.  Given a Drinfeld module $\Phi:{\mathbb F}[t] \to \operatorname{End}_K({\mathbb G}_a)$ over $K$ and a positive integer $g$ we regard both $K^g$ and $\A_K^g$ as $\Phi({\mathbb F}_p[t])$-modules under the diagonal action induced by $\Phi$.  For $\Gamma \subseteq K^g$ a finitely generated $\Phi(\F_p[t])$-submodule and an affine subvariety $X \subseteq \bG_a^g$ defined over $K$, we study the intersection of $X(\A_K)$, the ad\`{e}lic points of $X$, with $\overline{\Gamma}$, the closure of $\Gamma$ with respect to the ad\`{e}lic topology, showing under various hypotheses that this intersection is no more than $X(K) \cap \Gamma$.
\end{abstract}

\date{\today}

\maketitle

%#######################################################################
%#######################################################################
%#######################################################################

\section{Introduction}

Fix a field $K$ with a set of places $\Omega$ so that each $x \in K$ is $v$-integral at all but finitely many $v \in \Omega$.  We say that the Hasse principle holds for a given class ${\mathcal C}$ of algebraic varieties over $K$ if for each $X \in {\mathcal C}$ the
set $X(K)$ of $K$-rational points on $X$ is nonempty just in case for each $v \in \Omega$ the set $X(K_v)$ of $K_v$-rational points is nonempty where $K_v$ is the completion of $K$
 with respect to $v$.

In some ways, the Hasse principle is both too strong and too weak.  It is too strong in the sense that the classes of varieties for which it is known to hold are quite restrictive,
for example, Brauer-Severi varieties when $K$ is a number field and $\Omega$ is the set of all places, while there are many classes of varieties for which it is known to fail.  It is too
weak in the sense that it merely says that one may test for existence of a $K$-rational point by checking local conditions.  It does not say that the set $X(K)$ may be computed from purely
local data.

Let us suppose now that ${\mathcal C}$ is a class of pairs $(X,Y)$ where each of $Y$ is a variety over $K$ and $X$ is a subvariety of $Y$ also defined over $K$.   Suppose moreover that
for each such pair $(X,Y) \in {\mathcal C}$ that we know $Y(K)$.  We wish to describe $X(K)$ as a subset of $Y(K)$ via strictly local data.  Consider the ring of ad\`{e}les of $K$:
$$\A_K := \left\{ (x_v)_{v \in \Omega} \in \prod_{v \in \Omega} K_v ~\vert~
x_v \in {\mathcal O}_{K_v} \text{ for all but finitely many } v \in \Omega \right\}.$$

From the diagonal inclusion $K \hookrightarrow {\mathbf A}_K$ we obtain inclusions  $X(K) \subseteq Y(K) \subseteq Y({\mathbf A}_K)$ and $X(K) \subseteq X({\mathbf A}_K)$.  Indeed,
we may express $X(K)  = Y(K) \cap X({\mathbf A}_K)$.   The set $Y(K)$ itself is not described by local conditions, but we may think of its closure with respect to the ad\`{e}lic
topology as a locally defined set.  Replacing $Y(K)$ by its closure $\overline{Y(K)}$, in general we have only the inclusion $X(K) \subseteq \overline{Y(K)} \cap X({\mathbf A}_K)$.
When equality holds for all pairs $(X,Y) \in {\mathcal C}$ then we would say that the set of $K$-rational points on $X$ may be computed locally.
In~\cite{Poonen-Voloch}, Poonen and Voloch show that when $K$ is a function field field of a curve, then the local conditions determine $X(K)$ whenever $Y$ is an abelian variety and $X$
is a subvariety.   Moreover, in the case that the field of constants of $K$ is finite, they relate the possible inequality of $X(K)$ with $\overline{Y(K)} \cap X({\mathbf A}_K)$
 to the Brauer-Manin obstruction.

With the above problem, we took the full set $Y(K)$ of $K$-rational points on $Y$ as given.  There is no good reason to do so, especially as in practice, even when $Y$
is an abelian variety it might be possible to determine some points on $Y$ and possibly thereby compute the group they generate, but the full computation of $Y(K)$ may be difficult.
In other cases, some proper subset of $Y(K)$ might have more arithmetic significance.  Let us say that a class ${\mathcal C}$ of triples $(X,Y,\Gamma)$ consisting of an algebraic
 variety $Y$ over $K$, a subvariety $X \subseteq Y$ also over $K$ and a set $\Gamma \subseteq Y(K)$ is locally determined if for each $(X,Y,\Gamma) \in {\mathcal C}$ we have
 $X(K) \cap \Gamma =  X({\mathbf A}_K) \cap \overline{\Gamma}$ in $Y({\mathbf A}_K)$.

In this paper we take $K$ to be a function field in positive characteristic and we address the question of when it happens that $(X,Y,\Gamma)$ is locally
 determined for $Y = {\mathbb A}^m_K$ affine space over $K$,
 $X \subseteq Y$ an arbitrary subvariety and $\Gamma \subseteq Y(K)$ a finitely generated submodule with respect to the action of a Drinfeld module.

Let $p$ be a prime number, $R := {\mathbb F}_p[t]$ and ${\mathbb F} := {\mathbb F}_p(t)$.  For the remainder of this introduction we shall assume that $K$ is a field extension of  ${\mathbb F}$.  We denote by $\operatorname{End}_K({\mathbb G}_a)$ the ring of endomorphisms of the additive group scheme over $K$.  Concretely, $\operatorname{End}_K({\mathbb G}_a)$
  may be identified with the ring of additive polynomials over $K$ under addition and composition.  A Drinfeld module of generic characteristic defined over $K$ is a ring homomorphism   $\Phi:R \to \operatorname{End}_K({\mathbb G}_a)$ for which $\Phi_t := \Phi(t)$ has differential $t$, but is not scalar multiplication. That is,
$$\Phi_t(x) = tx + c_1 x^p + \cdots + c_D x^{p^D} \in K[x] \text{ where } c_D \neq 0 \text{ (and }D \geq 1 \text{).}$$

Let $g$ be any positive integer, then we let $\Phi$ act diagonally on ${\mathbb G}_a^g$.  This induces a natural structure of a $\Phi(R)$-module on $K^g$ where each element $a \in R$ acts via $\Phi_a := \Phi(a)$; when we want to emphasize the $\Phi(R)$-module structure of $K^g$ we will denote it by $K_{\Phi}^g$. We say that $\Gamma \subseteq K^g$ is a $\Phi(R)$-module if it is a submodule of $K_{\Phi}^g$ with respect to this action.
  Our main theorems (Theorem 2.1 and Theorem 2.3) assert that under appropriate hypotheses, detailed in Section~\ref{statement results} and whose necessity or removability are discussed in Remark~\ref{hypotheses removal}, for a subvariety $X \subseteq {\mathbb G}_a^g$ of a power of the additive group and a
finitely generated $\Phi(R)$-submodule $\Gamma \subseteq K^g$, the local conditions determine the intersection of $\Gamma$ with $X$.  That is, $X(K) \cap \Gamma = X({\mathbf A}_K) \cap \overline{\Gamma}$.

We had embarked on this project hoping to thereby prove Denis's Mordell-Lang conjecture for Drinfeld modules~\cite{Denis}.

\begin{conjecture}
\label{DML}
Let $\Phi$ be a Drinfeld module of generic characteristic.
If $X \subseteq {\mathbb G}_a^g$ is any subvariety of a Cartesian power of the additive group and $\Gamma \subseteq K^g$ is a finitely generated $\Phi(R)$-module, then
$X(K) \cap \Gamma$ is a finite union of translates of $\Phi(R)$-submodules of $\Gamma$.
\end{conjecture}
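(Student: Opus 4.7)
The plan is to use the adelic main theorems to reduce the conjecture to a structural question on the parameter space of $\Gamma$. Choose $\Phi(R)$-module generators $\gamma_1,\dots,\gamma_r$ of $\Gamma$ and let $\pi\colon R^r \to \Gamma$ denote the surjection $(a_1,\dots,a_r)\mapsto \sum_i \Phi_{a_i}(\gamma_i)$. Since $R=\F_p[t]$ is a PID, a finite union of cosets of $R$-submodules in $R^r$ pushes forward to a finite union of translates of $\Phi(R)$-submodules of $\Gamma$, so it suffices to show that $\pi^{-1}(X(K)\cap\Gamma)$ has this form. Whenever the hypotheses of Theorem~2.1 or Theorem~2.3 hold, we may replace $X(K)\cap\Gamma$ by $X(\A_K)\cap\Gammabar$ and work with continuous data rather than rational points.

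Next, extend $\pi$ to a continuous map $\widehat\pi\colon\widehat R^r \to \A_K^g$, where $\widehat R:=\prod_v \widehat R_v$ is the product of the completions of $R$ at the places that $K$ induces on $\F$. The image of $\widehat\pi$ is $\Gammabar$ up to a standard compactness argument, so it suffices to analyze the closed compact set $Z:=\widehat\pi^{-1}\bigl(X(\A_K)\bigr)\subseteq \widehat R^r$. At any place $v$ of good reduction for $\Phi$ and for the generators of $\Gamma$, the defining equations of $X$ pulled back via $\widehat\pi$ are $v$-adic analytic on $\widehat R_v^r$. A Skolem--Mahler--Lech / Chabauty--Coleman style argument -- passing to an open subgroup of finite index on which the formal logarithm of $\Phi$ at $v$ converges and applying Weierstrass preparation to the resulting system -- should locally present $Z\cap\widehat R_v^r$ as a finite union of cosets of closed $\widehat R_v$-submodules.

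The principal obstacle is the passage from this family of local coset descriptions to a single global coset description over $R$. In positive characteristic the formal $t$-adic logarithm of $\Phi$ is seldom a local isomorphism on all of $\bG_a^g$, Frobenius twists provide additional closed subgroup schemes of $\bG_a^g$ not visible from $K$, and isotrivial components of $\Phi$ can give rise to genuine counterexamples to any naive form of the conjecture (in the spirit of the Abramovich--Voloch and Moriwaki--Hrushovski examples for isotrivial abelian varieties). A complete proof therefore seems to require combining the adelic input of the present paper with a non-isotriviality hypothesis on $\Phi$ -- or on $X$ relative to $\Phi$ -- strong enough both to rule out twist-cosets and to patch the uniform local coset descriptions into a single global one; this is precisely the gap that leaves the conjecture out of reach of the adelic method in isolation.
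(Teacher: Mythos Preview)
The statement you are attempting to prove is Conjecture~\ref{DML}, not a theorem: the paper does \emph{not} prove it, and says so explicitly.  In the introduction the authors write that they ``had embarked on this project hoping to thereby prove Denis's Mordell--Lang conjecture for Drinfeld modules'' but ``instead'' obtained the ad\`{e}lic theorems; Remark~\ref{hypotheses removal} repeats that ``Denis's conjecture is still open.''  There is therefore no proof in the paper against which to compare your proposal.

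Your proposal is also not a proof, and to your credit you say as much in the final paragraph.  A few remarks on the sketch itself.  First, invoking Theorem~\ref{adelic result generic characteristic} gains nothing toward the conjecture: its content in the generic-characteristic case is that the ad\`{e}lic topology on $\Gamma$ is discrete (Proposition~\ref{discrete subgroups}), so $\Gammabar=\Gamma$ and $X(\A_K)\cap\Gammabar=X(K)\cap\Gamma$ tautologically --- no structural information about the intersection is produced.  Second, your completion $\widehat R=\prod_v \widehat R_v$ indexed by ``places that $K$ induces on $\F$'' is not the object that governs $\Gammabar$ in this paper; the ad\`{e}les here are taken over places of $K$ associated to divisors of the model $\cV$, and in the generic-characteristic case there is no map $\widehat\pi$ with image $\Gammabar$ beyond $\Gamma$ itself.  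Third, the Skolem--Mahler--Lech/Chabauty heuristic you outline is a natural thing to try, but the obstruction you identify --- passing from local coset descriptions to a global one, and the failure of the formal logarithm to linearize $\Phi$ in positive characteristic --- is exactly why the conjecture remains open; your last sentence is an accurate assessment rather than a technicality to be filled in.
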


Instead, we have strengthened the arguments employed in the proofs of the known partial results towards Conjecture~\ref{DML} to reduce the problem of computing $X({\mathbf A}_K) \cap \overline{\Gamma}$ to the case that $X$ is
zero dimensional.  This is very much in the spirit of the proofs of the analogous results about semiabelian varieties~\cite{Poonen-Voloch,Sun}.

While the overall structure of our proofs is similar to those for semiabelian varieties, there are some essential differences; the most important of which concerns the problem
of showing that the topology induced on $\Gamma$ from the ad\`{e}lic topology is at least as strong as the topology given by the subgroups $\Phi_a(\Gamma)$ as $a$ ranges
through the nonzero elements of $R$.  In the case of semiabelian varieties, the corresponding result is almost immediate, but here, because the ambient geometry of the additive group
does not determine the Drinfeld module structure, our proof is substantially more complicated.

This paper is organized as follows.  In Section~\ref{statement results} we establish our notation and state our main results.  In Section~\ref{dimension zero} we prove a simple case of our theorem in which
the ad\`{e}lic topology on $\Gamma$ is discrete.  In Section~\ref{first part} we reduce the overall problem to that of zero dimensional $X$. We complete the proof in Section~\ref{second part} by proving
a uniform version of a version of the Drinfeld module Mordell-Lang conjecture.

\medskip

\begin{acknowledgments}
The first author is grateful to Thomas Tucker for many stimulating conversations and for his constant encouragement for writing this paper.  The second author was partially supported by NSF grants FRG DMS-0854998 and DMS-1001556 during the writing of this paper.
\end{acknowledgments}

%#######################################################################
%#######################################################################

\section{Statement of our results}
\label{statement results}

Let $K$ be a finitely generated extension of $\F$ such that $\trdeg_{\F}K\ge 1$. We study the intersection of the ad\`{e}lic points of $V$ with the ad\`{e}lic closure of $\Gamma$ for both cases: generic characteristic and special characteristic Drinfeld modules $\Phi:R\lra \End_K(\bG_a)$. First we need to set up our notaton.

We let $\Kbar$ be a fixed algebraic closure of $K$, and we denote by $K^{\sep}$ the separable closure of $K$ inside $\Kbar$. We also let $\Fbar$ be the algebraic closure of $\F$ inside $\Kbar$.  At the expense of replacing $K$ by a finite extension and then replacing $\F$ by a finite extension, we may assume (see \cite[Remark 4.2]{de Jong}) there exists a projective, smooth, geometrically irreducible variety $\cV$ defined over $\F$ such that its function field equals $K$ (note that $\F$ is the algebraic closure of $\F_p(t)$ inside $K$). We let $\Omega:=\Omega_{\cV}$ be the set of \emph{all} valuations of $K$ associated with the irreducible divisors of $\cV$. By abuse of language, we will also call each $v\in\Omega$ a place of $K$; note that for each nonzero $x\in \F$, we have $|x|_v=1$, where $| \cdot |_v$ is the corresponding norm for the place $v$. Also note that $\F$ embeds naturally into the residue field $\F_v$ of each place $v\in\Omega$; furthermore, each $\F_v$ is a finite extension of $\F$ and the reduction modulo $v$ may be seen as a \emph{specialization} of $K$ to $\Fbar$.
Finally, we note that $K$ admits a product formula with respect to the set of places $\Omega$ (see \cite{Serre}); i.e., there exists a set of positive integers $\{N_v\}_{v\in\Omega}$ such that for all $x\in K^*$ we have
\begin{equation}
\label{product formula}
\prod_{v\in\Omega} |x|_v^{N_v}=1.
\end{equation}

Now, for each $v\in\Omega$, we let $K_v$ be the completion of $K$ at $v$. We let $\OO_v$ be the ring of $v$-adic integers contained in $K_v$, and we equip $K_v$ with the $v$-adic topology. We define the topological ring of ad\`{e}les $\A_K$ as the restricted direct product $\prod_{v\in\Omega_1} (K_v,\OO_v)$ where $\Omega_1$ is a cofinite subset of $\Omega$. More precisely, each element of $\A_K$ is of the form $\{x_v\}_{v\in\Omega_1}$, where $x_v\in K_v$ for all $v$, and moreover, for all but finitely many $v\in\Omega_1$ we actually have that $x_v\in\OO_v$.
In particular, $\prod_{v\in\Omega_1}\OO_v$ is an open subset of $\A_K$ whose induced topology is the usual product topology. We note that one could take instead the ad\`{e}les as the restricted product of $(K_v,\OO_v)$ for \emph{all} places $v\in\Omega$; i.e.,
$$\A_{K,\tot}:=\prod_{v\in\Omega} (K_v,\OO_v).$$
The restriction on $K$ of the topology from $\A_{K,\tot}$ yields a stronger topology than the topology induced from $\A_K$. Therefore, the results we are proving regarding the closures in the ad\`{e}lic topology of subsets of $K$ are stronger with our definition for the ad\`{e}les; see also the next result which is a purely topological statement.

\begin{prop}
\label{essential lemma}
Let $\Gamma\subset K$ be a set of points which are all integral at all places in $\Omega_1$. Let $y=(y_v)_{v\in\Omega}\in \A_{K,\tot}$ be a point in the closure of $\Gamma$ inside $\A_{K,\tot}$. Assume we know that there exists $y_0\in K$ such that $y_v=y_0$ for all $v\in\Omega_1$. Then $y_v=y_0$ for \emph{all} $v\in\Omega$; i.e., $y=y_0$.
\end{prop}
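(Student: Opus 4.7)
The plan is to argue by contradiction using the product formula \eqref{product formula} on $K$. Suppose some coordinate $y_{v_0}$ at a place $v_0 \in \Omega \setminus \Omega_1$ differs from $y_0$, and set $\delta := |y_{v_0} - y_0|_{v_0} > 0$. I would construct approximating elements $x = x_\epsilon \in \Gamma$ matching $y$ at finitely many places to within $\epsilon$, and then show that $\prod_v |x_\epsilon - y_0|_v^{N_v}$ is bounded above by a quantity tending to $0$ with $\epsilon$, contradicting the product formula (which forces the product to equal $1$).

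The first step is to assemble a finite auxiliary set $S \subset \Omega$ large enough to contain: the finite complement $\Omega \setminus \Omega_1$; the finite pole set $T$ of $y_0$ inside $\Omega_1$; an extra place $w \in \Omega_1 \setminus T$ that will absorb a strictly positive power of $\epsilon$; and the finitely many $v$ where $y_v \notin \OO_v$. Since $y$ lies in the closure of $\Gamma$ in $\A_{K,\tot}$, for every $\epsilon > 0$ one obtains $x \in \Gamma$ with $|x - y_v|_v < \epsilon$ at each $v \in S$ and $x - y_v \in \OO_v$ off $S$; the latter is automatic from the integrality hypothesis on $\Gamma$ at $\Omega_1$ together with $y_v \in \OO_v$ off $S$.

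The heart of the argument is a place-by-place estimate of $|x - y_0|_v$. At each $v_i \in \Omega \setminus \Omega_1$, choosing $\epsilon$ smaller than every nonzero $|y_{v_i} - y_0|_{v_i}$, the strict ultrametric inequality forces $|x - y_0|_{v_i} = |y_{v_i} - y_0|_{v_i}$ (a fixed positive quantity at $v_0$); at those $v_i$ where $y_{v_i} = y_0$, the value is instead bounded by $\epsilon$. At each $v \in T \cup \{w\}$, one has $y_v = y_0$, so $|x - y_0|_v < \epsilon$. At every remaining $v \in \Omega_1 \setminus (T \cup \{w\})$, the $v$-integrality of $x$ and the fact that $v \notin T$ yield $|x - y_0|_v \le 1$ by the ultrametric inequality. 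Assembling these bounds and invoking the product formula for $x - y_0 \in K^*$ (nonzero because $|x - y_0|_{v_0} = \delta$ whenever $\epsilon < \delta$) yields
$$1 \;=\; \prod_{v \in \Omega} |x - y_0|_v^{N_v} \;\le\; C \cdot \epsilon^{N_w},$$
where $C > 0$ is a constant depending only on $y$ and $y_0$. Letting $\epsilon \to 0$ delivers the contradiction.

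The main obstacle I anticipate is the careful bookkeeping needed to ensure that the exponent of $\epsilon$ on the right-hand side is strictly positive; this is precisely the reason for including the extra place $w \in \Omega_1 \setminus T$, since in principle all the "bad" places $\Omega \setminus \Omega_1$ could contribute only fixed positive norms rather than small ones. The integrality hypothesis on $\Gamma$ at every $v \in \Omega_1$ is what permits the trivial bound $|x - y_0|_v \le 1$ at the cofinitely many places we do not wish to track individually, and thus is essential to the argument.
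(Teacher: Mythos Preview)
Your argument is correct and follows essentially the same route as the paper's: approximate $y$ by elements $x\in\Gamma\subset K$, bound $|x-y_0|_v$ place by place using integrality on $\Omega_1$ and the ultrametric inequality on $\Omega\setminus\Omega_1$, and obtain a contradiction with the product formula. Two minor remarks: your ``pole set'' $T$ of $y_0$ inside $\Omega_1$ is in fact empty (since $y_0=y_v$ is a limit of $\Omega_1$-integral elements for each $v\in\Omega_1$), so that part of the bookkeeping is unnecessary; and whereas you argue by contradiction from a hypothetical bad place $v_0$, the paper instead shows directly that the approximating sequence is eventually equal to $y_0$, then reads off $y_v=y_0$ at the remaining places---but this is only a cosmetic difference.
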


\begin{proof}
Clearly, $y_0$ is integral at each place $v\in\Omega_1$. At the expense of replacing $y$ by $y-y_0$ we may assume $y_0=0$. Therefore, we know that there exists an infinite sequence $\{x_n\}_{n\in\N}\subset K$ satisfying the following conditions:
\begin{enumerate}
\item[(a)] $|x_n|_v\le 1$ for all $v\in\Omega_1$;
\item[(b)] $|x_n|_v\to 0$ as $n\to\infty$,  for all $v\in\Omega_1$; and
\item[(c)] $|x_n-y_v|_v\to 0$ as $n\to\infty$, for all $v\in\Omega\setminus\Omega_1$.
\end{enumerate}
Using condition (c) above, we conclude that there exists $n_0\in\N$ such that for all $n\ge n_0$ we have $|x_n|_v=|y_v|_v$ for all $v\in\Omega\setminus\Omega_1$. In particular, there exists a positive real number $C_0$ such that for all $n\ge n_0$, we have
\begin{equation}
\label{essential one}
\prod_{v\in\Omega\setminus\Omega_1}|x_n|_v^{N_v}\le C_0.
\end{equation}
On the other hand, using conditions (a)-(b) above we conclude that
\begin{equation}
\label{essential two}
\lim_{n\to\infty} \prod_{v\in\Omega_1} |x_n|_v^{N_v} =0.
\end{equation}
Now, if the sequence $\{x_n\}_{n\in\N}$ contains infinitely many nonzero elements, then equations \eqref{essential one} and \eqref{essential two} provide a contradiction to the product formula \eqref{product formula}. Therefore, we obtain that there exists $n_1\in\N$ such that $x_n=0$ for all $n\ge n_1$. Hence for each $v\in\Omega\setminus\Omega_1$ we have $y_v=0$, which concludes our proof.
\end{proof}

Naturally, $\bG_a^g(\A_K)=\A_K^g$, and we define the product topology on $\A_K^g$. Then for any subset $S\subseteq \A_K^g$, we let $\overline{S}$ be the topological closure of $S$ in $\A_K^g$. Clearly, $\A_K$ (and $\A_K^g$) have natural structures of $\Phi(R)$-modules denoted $\A_{K,\Phi}$ (and respectively  $\A_{K,\Phi}^g$).

We will prove the following result.
\begin{thm}
\label{adelic result generic characteristic}
Let $K$ be a function field over $\F$ of arbitrary finite transcendence degree, and let $\Phi$ be a Drinfeld module of generic characteristic defined over $K$. Then for each affine subvariety $V\subseteq \bG_a^g$ defined over $K$, and for each finitely generated $\Phi(R)$-submodule $\Gamma\subseteq K^g$, we have $V(K)\cap\Gamma=V(\A_K)\cap \Gammabar$.
\end{thm}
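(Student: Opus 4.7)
The inclusion $V(K)\cap\Gamma\subseteq V(\A_K)\cap\Gammabar$ is trivial from the diagonal embedding $K\hookrightarrow\A_K$. For the reverse inclusion, I would fix $y\in V(\A_K)\cap\Gammabar$ together with a net $\{\gamma_n\}_{n\in\N}\subseteq\Gamma$ converging adelically to $y$, and split the argument into two phases guided by the roadmap sketched in the introduction.

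Phase 1 — reduction to zero-dimensional $V$. Let $W\subseteq V$ be the Zariski closure over $K$ of an infinite subsequence of $\{\gamma_n\}$, so that $W$ is defined over $K$, contains $y$ adelically, and is covered by the approximating sequence Zariski-densely. Replacing $V$ by $W$, we may assume $\Gamma\cap V$ is Zariski dense in $V$. If $\dim V>0$, I would use the known partial results toward Denis's Conjecture~\ref{DML} (or a direct geometric input describing the subvarieties of $\bG_a^g$ that can contain a Zariski-dense $\Phi(R)$-submodule coset) to exhibit a translate $u+H$ of a proper $\Phi(R)$-submodule $H\subseteq\Gamma$ inside $V$ and, after reorganizing the approximating sequence into finitely many cosets mod $H$, to reduce the ambient dimension. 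Iterating this Poonen--Voloch-style stratification and using finite-generation of $\Gamma$, one finally replaces $V$ by a single $K$-point.

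Phase 2 — the zero-dimensional case. Now $V=\{v_0\}$ for some $v_0\in K^g$, and the statement becomes: any $y\in K^g$ that lies in $\Gammabar$ must already lie in $\Gamma$. Translating by $v_0$, we may assume $y=0$, so we must show that if $\{\gamma_n\}\subseteq\Gamma$ satisfies $\gamma_n\to 0$ in the adelic topology, then eventually $\gamma_n=0$. The key assertion I would prove is that the adelic topology on $\Gamma$ refines the $\Phi$-adic topology whose basic neighborhoods of $0$ are the submodules $\Phi_a(\Gamma)$ for nonzero $a\in R$: explicitly, for each nonzero $a\in R$ one has $\gamma_n\in\Phi_a(\Gamma)$ for all $n\gg 0$. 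Combined with the fact that in generic characteristic the intersection $\bigcap_{a\neq 0}\Phi_a(\Gamma)$ is trivial (since $\Gamma$ is finitely generated, torsion is finite, and one can pick $a$ killing this torsion), this forces $\gamma_n=0$ eventually, giving $y\in\Gamma$.

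The main obstacle will be exactly this topological comparison in Phase 2. In the semiabelian setting this step is essentially free because the group law lives in the ambient variety; here the $R$-action is extra data not visible to $\bG_a^g$ alone, so adelic smallness does not automatically yield $\Phi$-divisibility. To bridge the gap I would use the product formula~\eqref{product formula} together with Proposition~\ref{essential lemma}, specialize to the residue fields $\F_v$ where the reduction of $\Phi$ is well-defined and its torsion is controlled, and apply Denis-type local height estimates to convert small adelic norm into a large $\Phi$-divisibility exponent. This analysis is what the uniform version of the Drinfeld-module Mordell--Lang statement promised in Section~\ref{second part} is designed to supply, and it is where the real work of the proof lies.
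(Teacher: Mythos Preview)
Your proposal conflates the generic-characteristic theorem with the special-characteristic one and, as a result, both overcomplicates the argument and introduces a genuine logical gap.

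The paper's proof of this theorem is one line once Proposition~\ref{discrete subgroups} is in hand: at every place $v\in\Omega_1$ (where the coefficients of $\Phi_t$ and the generators of $\Gamma$ are integral) the $v$-adic topology on $\Gamma$ is already \emph{discrete}, so $\Gammabar=\Gamma$ and there is nothing further to prove.  No reduction to $\dim V=0$, no Mordell--Lang input, no comparison with the $\Phi$-adic filtration is needed.  The reason discreteness holds is specific to generic characteristic: since $\Phi_a'(x)=a_0\in\F_p^*$ and every nonzero $b\in R\subset\F^*$ is a $v$-adic unit, one has $|\Phi_b(y)|_v=|by|_v=|y|_v$ whenever $|y|_v<1$, so once an element of $\Gamma$ enters the open unit ball its absolute value can never shrink further under the $\Phi(R)$-action.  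An induction on the number of generators then rules out any nontrivial sequence in $\Gamma$ tending to $0$.

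Your Phase~2, by contrast, aims only to show that the ad\`{e}lic topology refines the $\Phi$-adic one and that $\bigcap_{a\ne 0}\Phi_a(\Gamma)=0$.  Even granting both, the conclusion ``$\gamma_n\to 0$ ad\`{e}lically $\Rightarrow$ $\gamma_n=0$ eventually'' does not follow: the $\Phi$-adic topology on a non-torsion finitely generated $R$-module is Hausdorff but \emph{not} discrete (think of $n!\to 0$ in $\widehat{\Z}$), so a sequence can converge to $0$ without ever vanishing.  You need the much stronger discreteness statement, and your outlined route via height estimates and the material of Section~\ref{second part} does not give it---indeed Section~\ref{second part} is written for an \emph{inseparable} $\psi=\Phi_t$, i.e.\ the special-characteristic case, and its conclusions are unavailable here.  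Likewise your Phase~1 invokes partial results toward Conjecture~\ref{DML} that are not established in the paper for generic characteristic and are in any case unnecessary.
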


So, Theorem~\ref{adelic result generic characteristic} shows that the intersection $V(K)\cap\Gamma$ can be recovered from the local information (as given by the ad\`{e}les). Theorem~\ref{adelic result generic characteristic} is an immediate consequence of the fact that for all but finitely many places $v\in\Omega$, the induced $v$-adic topology on $\Gamma$ is the discrete topology (see Proposition~\ref{discrete subgroups}).

Our Theorem~\ref{adelic result generic characteristic} is the Drinfeld modules analogue of \cite[Theorem A]{Poonen-Voloch}. Similarly, we can prove an analogue of \cite[Theorem B]{Poonen-Voloch} for Drinfeld modules of special characteristic.

A Drinfeld module of special characteristic is a ring homomorphism $\Phi: R \lra \End_K(\bG_a)$ such that $\Phi_t=\Phi(t)$ is an inseparable endomorphism of the additive group scheme; more precisely,
$$\Phi_t(x)=\sum_{i=d}^D c_i x^{p^i}\text{ for some $c_i\in K$, where $1\le d\le D$.}$$

In order to state our next result we also need the following definition.
\begin{defi}
\label{full subgroup}
For each $\Phi(R)$-submodule $\Gamma\subseteq K^g$, we call its full divisible hull inside $K$, the set of all $x\in\bG_a^g(K)$ such that for some nonzero $a\in R$ we have $\Phi_a(x)\in \Gamma$.

We say that the subgroup $\Gamma\subseteq K_{\Phi}^g$ is \emph{full in $K$}, if it equals its full divisible hull inside $K_{\Phi}^g$. When the field $K$ is understood from the context, we will simply say that $\Gamma$ is \emph{full}.
\end{defi}

As proved in \cite{Wang} (see also \cite{Poonen} and \cite[Theorem 5.12]{JNT-2} for similar results), the $\Phi(R)$-module structure of $K^g_{\Phi}$ is that of a direct sum of a finite torsion module with a free $\Phi(R)$-module of rank $\aleph_0$. So, replacing $\Gamma$ with the $\Phi(R)$-submodule containing all $x\in K^g$ such that $\Phi_a(x)\in\Gamma$ for some nonzero $a\in R$, we may assume $\Gamma$ is both full in $K$ and finitely generated.

Let $\Phi:R\lra \End_K(\bG_a)$ be any Drinfeld module, and let $L$ be a field containing $K$. For each nonzero $a\in R$ we let $\Phi[a](L)$ be the set of all $x\in L$ such that $\Phi_a(x)=0$. We also let
$$\Phi[t^{\infty}](L):=\bigcup_{n\in\N} \Phi[t^n](L),$$
and
$$\Phi_{\tor}(L):=\bigcup_{a\in R\setminus\{0\}} \Phi[a](L).$$
When $L=\Kbar$, we simply use $\Phi[a]$ and $\Phi_{\tor}$ instead of $\Phi[a](\Kbar)$ and respectively $\Phi_{\tor}(\Kbar)$.
In general, for any $\Phi(R)$-submodule $\Gamma$ we denote by $\Gamma_{\tor}$ the torsion submodule of $\Gamma$.

We also recall the notion of modular transcendence degree, first introduced by the second author in \cite{Sc}.
The \emph{modular transcendence degree} of $\Phi$ is the smallest nonnegative integer $d$ such that there exists a subfield $L\subseteq \Kbar$ of transcendence degree $d$ over $\F_p$, and there exists $\gamma\in \Kbar$ such that
\begin{equation}
\label{conjugation phi}
\gamma^{-1}\Phi_t(\gamma\cdot x)\in L[x].
\end{equation}

\begin{thm}
\label{adelic result special characteristic}
Let $K$ be a function field over $\F$ of arbitrary finite transcendence degree, and let $\Phi$ be a Drinfeld module of special characteristic defined over $K$. Assume $\Phi[t^{\infty}](K^{\sep})$ is finite, and also assume that $\Phi$ has positive modular transcendence degree. Let $V$ be an affine subvariety of $\bG_a^g$ defined over $K$ which contains no translate of a positive dimensional algebraic subgroup of $\bG_a^g$. Then for each finitely generated full $\Phi(R)$-submodule $\Gamma\subseteq K^g$, we have $V(K)\cap\Gamma=V(\A_K)\cap\Gammabar$.
\end{thm}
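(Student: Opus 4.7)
The plan is to follow the two-step architecture announced in Section~\ref{statement results}: reduce the general statement to the zero-dimensional case using the methods of Section~\ref{first part}, and then treat that case via a uniform version of the Drinfeld module Mordell-Lang conjecture proved in Section~\ref{second part}.

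For the reduction to $\dim V = 0$, one exploits the fact that $V$ contains no translate of a positive-dimensional algebraic subgroup of $\bG_a^g$, so that under the standing hypotheses on $\Phi$ the known partial cases of Conjecture~\ref{DML} force $V(K)\cap\Gamma$ to be finite. Given a candidate point $y\in V(\A_K)\cap\overline{\Gamma}$ and an approximating sequence $\gamma_m\in\Gamma$ with $\gamma_m\to y$, one translates $V$ by elements of $\Gamma$ and applies $\Phi_a$ for suitable $a\in R$ to stratify $V$ into successively lower-dimensional pieces still satisfying the geometric hypothesis on subgroup translates. Because $\Gamma$ remains finitely generated and full throughout the process, an induction on $\dim V$ reduces the problem to the case $\dim V = 0$.

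In the zero-dimensional case $V(K)\cap\Gamma = \{x_1,\ldots,x_n\}$ is finite. Given $y\in V(\A_K)\cap\overline{\Gamma}$ and $\gamma_m\to y$, the uniform Mordell-Lang statement of Section~\ref{second part} bounds how the $\Phi(R)$-coordinates of points of $\Gamma$ lying on $V$ can behave, so that the $\gamma_m$ are eventually constrained to a finite subset of $\Gamma$. Consequently $y_v = x_i$ for some fixed index $i$ at every $v\in\Omega_1$, and Proposition~\ref{essential lemma} promotes this local coincidence to every place of $\Omega$, yielding $y = x_i\in V(K)\cap\Gamma$.

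The main obstacle, as emphasized in the introduction, is the topological comparison needed in Section~\ref{second part}: one must show that the ad\`elic topology restricted to $\Gamma$ is at least as fine as the topology generated by the subgroups $\Phi_a(\Gamma)$ for nonzero $a\in R$. For semiabelian varieties the analogous assertion is automatic from the ambient geometry, but on $\bG_a^g$ the ambient additive group structure does not record the Drinfeld module action, so the comparison demands genuine work. It is precisely here that the hypotheses $|\Phi[t^{\infty}](K^{\sep})|<\infty$ and positive modular transcendence degree of $\Phi$ are essential: together with the product formula~\eqref{product formula}, they guarantee enough places at which $\Phi_a$-divisibility is detected to force any ad\`elic limit of points of $\Gamma$ to respect the $\Phi$-module structure, and without either hypothesis the topological comparison, and hence the whole argument, breaks down.
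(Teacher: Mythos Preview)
Your two-step outline is correct in spirit, but you have inverted the roles of the two sections and, more seriously, mis-identified the mechanisms in both steps.

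On the labeling: Section~\ref{first part} is where the zero-dimensional case is handled (Proposition~\ref{zero-dimensional case}), and Section~\ref{second part} is where the reduction to dimension zero via the uniform Mordell--Lang statement (Proposition~\ref{uniform DML}) is carried out --- not the other way around. The topological comparison between the ad\`{e}lic topology on $\Gamma$ and the $\Phi_a(\Gamma)$-topology (Propositions~\ref{adelic topology is given by powers of t}--\ref{adelic topology is given by all a}) is developed and used in Section~\ref{first part}.

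The substantive gap is in your reduction to $\dim V=0$. ``Translating $V$ by elements of $\Gamma$, applying $\Phi_a$, and inducting on $\dim V$'' does not control $V(\A_K)\cap\overline{\Gamma}$: knowing $V(K)\cap\Gamma$ is finite says nothing about ad\`{e}lic limits, and there is no evident way to push an approximating sequence into a lower-dimensional subvariety. The paper's reduction instead proves a \emph{uniform} bound (Proposition~\ref{automatic uniformity}): there exist $m,\ell$ such that for \emph{every} fully separable extension $L/K$ and every $a\in\bG_a^g(L)$ one has $\#\bigl((a+V(L))\cap\Phi_{t^m}(\bG_a^g(L))\bigr)\le\ell$. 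This comes from the modularity of $\psi^\infty$ (Theorem~\ref{AWZ}) via a model-theoretic compactness argument, and the crucial point is that it applies with $L=K_v$ for each $v$ separately. Combined with the finiteness of $\overline{\Gamma}/\Phi_{t^m}\overline{\Gamma}$ and coset representatives lying in $\Gamma$ (Corollary~\ref{the quotient is Hausdorff bis}), one obtains a fixed zero-dimensional $W\subseteq V$ over $K$ with $V(\A_K)\cap\overline{\Gamma}\subseteq W(\A_K)$ --- no induction on dimension is involved.

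Your zero-dimensional argument is also off: no uniform Mordell--Lang enters there, and the sequence $\gamma_m$ is never forced into a finite subset of $\Gamma$. After extending $K$ so that $V(\Kbar)=V(K)$, the paper uses the topological comparison (Proposition~\ref{adelic topology is given by powers of t}) to show that for $y=(y_v)\in V(\A_K)\cap\overline{\Gamma}$ each difference $y_v-y_w$ lies in $\Phi_{\tor}(K)^g$, then invokes $\overline{\Gamma}_{\tor}=\Gamma_{\tor}$ (Proposition~\ref{adelic torsion is regular torsion}) and $\Gamma_v\cap K^g=\Gamma$ (Proposition~\ref{the closure is the same}) to identify the limit directly as a point of $V(K)\cap\Gamma$.
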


Since we assumed that $V$ contains no translate of a positive dimensional algebraic subgroup of $\bG_a^g$, the main result of \cite{Sc} (see also our Theorem~\ref{AWZ}) yields that $V(K)\cap\Gamma$ is finite. However, the novelty of our result is that we can show that each of these finitely many points from the intersection $V(K)\cap\Gamma$ can be recovered purely from the local information given by the ad\`{e}lic topology. The strategy for proving Theorem~\ref{adelic result special characteristic} is as follows:
\begin{enumerate}
\item[(a)] in Section~\ref{first part} we show that Theorem~\ref{adelic result special characteristic} holds under the additional hypothesis that $V$ is zero-dimensional; and
\item[(b)] then in Section~\ref{second part} we show that for each subvariety $V$, there exists a zero-dimensional subvariety $V_0$ of $V$ also defined over $K$ such that $V(\A_K)\cap\Gammabar= V_0(\A_K)\cap \Gammabar$. This reduces Theorem~\ref{adelic result special characteristic} to part (a) above.
\end{enumerate}

\begin{remark}
\label{hypotheses removal}
To what extent can we eliminate our hypotheses on the field of definition of $\Phi$, the geometry of $X$ and the fullness of $\Gamma$?

If $\Phi$ is isotrivial, that is, isomorphic to a Drinfeld module defined over a finite field, then the intersection $X(K) \cap \Gamma$ may very well be infinite coming from Frobenius orbits even if $X$ contains no translates of positive dimensional algebraic groups.  However, a properly reformulated Mordell-Lang statement holds in this case~\cite{TAMS} and arguments along the lines of those in~\cite{Sun} should yield the full result in this case.

The case where $\Phi$ is of generic characteristic and modular transcendence degree one, while $K$ is a finite extension of $\mathbb{F}_p(t)$ is intriguing, but as with the number field case of the corresponding problem for abelian varieties (which was not considered in~\cite{Poonen-Voloch}), this appears to be beyond the reach of existing techniques.  Indeed, the Drinfeld module problem may be harder than the number field abelian variety problem as due to Faltings' work~\cite{Faltings} we know the Mordell-Lang conjecture to be true, but Denis's conjecture is still open.

At one level, the restriction that $X$ not contain a translate of a positive dimensional algebraic group is necessary as if $X$ is a group  which meets $\Gamma$ in a Zariski dense set and the ad\`{e}lic topology on $\Gamma$ is not discrete, as will be the case if $\Omega$ consists only of places where $\Phi$ has good reduction and $\Gamma$ is integral, then necessarily $X(\A_K) \cap \overline{\Gamma} \neq X(K) \cap \Gamma$.  However, we conjecture that the assertion should be $X(\A_K) \cap \overline{\Gamma} = \overline{X(K) \cap \Gamma}$.   Our obstruction to proving this conjecture is that we do not have a good description of the connected algebraic groups which meet finitely generated $\Phi(R)$-modules in Zariski dense sets.  Denis's conjecture for Drinfeld modules of generic characteristic would imply that such a group must be a $\Phi(R)$-module, but the first author has produced counter-examples for some non-isotrivial Drinfeld modules of special characteristic~\cite{IMRN}.  Nevertheless, these examples are all $\Phi_t$-periodic and it would be enough to know that any such group is $\Phi_t$-periodic.

Finally, the requirement that $\Gamma$ be full becomes moot in our revised conjecture just as it was not an issue in~\cite{Poonen-Voloch} as the group there was assumed to be the full group of rational points.  While we were unable to produce an example showing the necessity of this hypothesis, we believe that it cannot be removed.
\end{remark}

%#######################################################################
%#######################################################################

\section{Proof of Theorem~\ref{adelic result generic characteristic}}
\label{dimension zero}

We are working with the notation and the hypotheses of Theorem~\ref{adelic result generic characteristic} (see Section~\ref{statement results}).

For each finitely generated $\Phi(R)$-submodule $\Gamma$ of $K^g_{\Phi}$, we have $\Gamma\subseteq \Gamma_0^g$, where $\Gamma_0\subseteq K_{\Phi}$ is the $\Phi(R)$-submodule generated by all projections of $\Gamma$ on each coordinate of $\bG_a^g$.
Clearly, $\Gamma_0$ is also a finitely generated $\Phi(R)$-submodule. We will show that each finitely generated $\Phi(R)$-submodule of $K_{\Phi}$ is discrete with respect to all but finitely many places $v\in\Omega$. In particular this yields that $\Gamma$ is discrete $v$-adically for almost all places $v$. Thus $\Gammabar=\Gamma$, and hence Theorem~\ref{adelic result generic characteristic} follows immediately.

\begin{prop}
\label{discrete subgroups}
Let $\Gamma\subseteq K_{\Phi}$ be a finitely generated $\Phi(R)$-submodule, and let $v$ be a place in $\Omega$ at which all coefficients of $\Phi_a$ (for each $a\in R$) and also all elements of $\Gamma$ are integral. Then $\Gamma$ is discrete as a subset of $K_v$.
\end{prop}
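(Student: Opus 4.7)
The plan is to produce a positive lower bound on $|y|_v$ for $y \in \Gamma \setminus \{0\}$; since $\Gamma$ is an additive subgroup of $K$, such a bound forces discreteness at the origin and, by translation, everywhere. First, I would split $\Gamma$ according to whether an element lies in $\mathfrak{m}_v$ or not: any $y \in \Gamma$ outside $\mathfrak{m}_v$ is a $v$-adic unit with $|y|_v = 1$ and is already bounded away from $0$, so it suffices to analyze $\Gamma \cap \mathfrak{m}_v$. This intersection is a $\Phi(R)$-submodule (since all coefficients of $\Phi_a$ are $v$-integral and $y$ is $v$-integral, so $|\Phi_a(y)|_v \leq |y|_v$) and is itself finitely generated by Noetherianity of $R \cong \Phi(R)$.

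The key technical input is the Drinfeld logarithm, which is available precisely because $v$ is trivial on $\F$ and hence $|t^{p^n} - t|_v = 1$ for every $n \geq 1$. I would define $\log_\Phi(x) := \sum_{n \geq 0} \ell_n x^{p^n}$ by imposing $\log_\Phi(\Phi_t(x)) = t \cdot \log_\Phi(x)$ with $\ell_0 = 1$; comparing coefficients produces the recursion $\ell_n(t^{p^n} - t) = -\sum_{i=1}^{\min(n,D)} \ell_{n-i}\, c_i^{p^{n-i}}$, which together with $|c_i|_v \leq 1$ gives $|\ell_n|_v \leq 1$ by induction. Hence the series converges on $\mathfrak{m}_v$, and because the $n=0$ term dominates, it defines a $v$-adic isometry $\mathfrak{m}_v \to \mathfrak{m}_v$. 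Being a sum of $p^n$-th powers, $\log_\Phi$ is additive in characteristic $p$, and so the intertwining $\log_\Phi \circ \Phi_t = t \cdot \log_\Phi$ extends to $\log_\Phi \circ \Phi_a = a \cdot \log_\Phi$ for every $a \in R$.

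Thus $\Lambda := \log_\Phi(\Gamma \cap \mathfrak{m}_v)$ is a finitely generated $R$-submodule of $K_v$ with $R$ acting by ordinary multiplication. As $K_v$ is a field, $\Lambda$ is torsion-free, hence free of some finite rank $s$, and any $R$-basis $\eta_1, \ldots, \eta_s$ of $\Lambda$ is automatically $\F$-linearly independent in $K_v$, since any $\F$-linear relation can be cleared of denominators to yield an $R$-relation. Therefore $\Lambda$ is contained in the $s$-dimensional $\F$-subspace $V := \F\eta_1 + \cdots + \F\eta_s \subset K_v$.

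To conclude, I would invoke the standard fact that, since $v$ is trivial on $\F$, the sublevel sets $V_a := \{y \in V : |y|_v \leq a\}$ are $\F$-subspaces, and their descending chain as $a$ decreases can have total dimension drop at most $s = \dim_\F V$. Consequently $\{|y|_v : y \in V \setminus \{0\}\}$ is a finite set, so it has a positive minimum $c_0$. Since $\Lambda \subset V$, every nonzero element of $\Lambda$ has $v$-adic absolute value at least $c_0$, and pulling back through the isometry $\log_\Phi$ yields the same lower bound on $(\Gamma \cap \mathfrak{m}_v) \setminus \{0\}$. The main obstacle is setting up $\log_\Phi$ correctly and verifying its convergence and isometry properties on $\mathfrak{m}_v$; the reduction to a finite-dimensional $\F$-subspace and the valuation lemma that finishes the proof are standard.
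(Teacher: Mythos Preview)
Your argument is correct but follows a genuinely different route from the paper's. The paper proceeds by induction on the number of generators of $\Gamma$: in the cyclic case one studies the ideal $\{a\in R:|\Phi_a(x_1)|_v<1\}$ and uses that $|b|_v=1$ for $b\in R\setminus\{0\}$ to derive a contradiction; in the inductive step one replaces the generators by elements of smaller absolute value, expands with respect to a uniformizer, and extracts an $\F_p(t)$-linear relation among leading coefficients that forces the offending sequence into a submodule of strictly smaller rank. No logarithm appears.

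By contrast, you linearize the problem via the formal Drinfeld logarithm, which is available here precisely because $v$ is trivial on $\F$ (so that $|t^{p^n}-t|_v=1$ and the coefficients $\ell_n$ stay integral). This converts the $\Phi(R)$-module $\Gamma\cap\mathfrak{m}_v$ isometrically into an ordinary finitely generated $R$-submodule $\Lambda\subset K_v$, and then the observation that $\Lambda$ sits inside a finite-dimensional $\F$-subspace of $K_v$ finishes the argument via the standard fact that such a subspace supports only finitely many $v$-adic values. Your approach is shorter and more conceptual, and it makes transparent \emph{why} the generic-characteristic hypothesis (which furnishes the linear term $tx$ in $\Phi_t$ and hence a convergent, isometric logarithm on $\mathfrak{m}_v$) is exactly what drives discreteness. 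The paper's argument is more elementary in that it avoids any analytic input and works directly with the combinatorics of the module; both ultimately hinge on the same feature of the place $v$, namely that $|\cdot|_v$ is trivial on $\F$.
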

\begin{proof}
We have to show that there is no infinite sequence of points in $\Gamma$ converging to $0$ with respect to the $v$-adic topology.
We prove Proposition~\ref{discrete subgroups} by induction on the number $r$  of generators for the $\Phi(R)$-module $\Gamma$.

Assume $r=1$, i.e. $\Gamma$ is the cyclic $\Phi(R)$-module generated by $x_1$, and also assume $\Gamma$ is not discrete with respect to the $v$-adic topology. In particular, this yields that $x_1$ is not torsion since otherwise $\Gamma$ would be finite (and thus discrete $v$-adically).

Therefore, there exists a sequence of distinct nonzero elements $a_n\in R$ (for each $n\in\N$) such that $|\Phi_{a_n}(x_1)|_v\to 0$ as $n\to \infty$.
Now, since each coefficient of $\Phi_a$, for each $a\in R$ is integral at $v$, we obtain that the set
$$I_1:=\{a\in R\text{ : }|\Phi_a(x_1)|_v<1\}$$
is an ideal (see also \cite[Lemma 3.11]{equidrin}). Since $R$ is a PID, we conclude that $I_1$ is generated by some polynomial $c_1$. Furthermore, since we assumed that $|\Phi_{a_n}(x_1)|_v\to 0$, we obtain that $I_1$ is nontrivial, and thus $c_1\ne 0$.
So, for each $n\in\N$ such that $|\Phi_{a_n}(x_1)|_v<1$ we have that $c_1\mid a_n$, and thus there exists a nonzero $b_n\in R$ such that $a_n=b_n\cdot c_1$. On the other hand, $|b_n|_v=1$ since each element of $\F_p(t)^*\subseteq \F^*$ is a $v$-adic unit. According to \cite[Lemma 3.10]{equidrin}, we obtain then that $$|\Phi_{a_n}(x_1)|_v=|\Phi_{b_n}(\Phi_{c_1}(x_1))|_v=|b_n\cdot \Phi_{c_1}(x_1)|_v=|\Phi_{c_1}(x_1)|_v,$$
which contradicts our assumption that $\Phi_{a_n}(x_1)$ converges $v$-adically to $0$.

Now, assume we proved that Proposition~\ref{discrete subgroups} holds for each finitely generated $\Phi(R)$-module spanned by less than $r$ elements; next we prove that it holds when $\Gamma$ is generated by $r$ elements. Since $\Gamma$ is a finitely generated $\Phi(R)$-submodule, it is a finite union of cosets of free finitely generated $\Phi(R)$-submodules. Therefore, it suffices to prove Proposition~\ref{discrete subgroups} for free submodules.
Let $x_1,\dots,x_r$ be a basis for the free $\Phi(R)$-module $\Gamma$.

Assume there exists an infinite sequence $\{(a_{n,1},\cdots,a_{n,r})\}_{n\in\N}$ of nonzero tuples of elements of $R$ (i.e. for each $n\in\N$ not all $a_{n,i}=0$) such that
$$\left|\sum_{i=1}^r \Phi_{a_{n,i}}(x_i)\right|_v\to 0\text{ as }n\to \infty.$$
The set $I_r$ of tuples $(c_1,\dots,c_r)\in R^r$ such that $|\sum_{i=1}^r \Phi_{c_i}(x_i)|_v<1$ is an $R$-submodule of $R^r$ (again using \cite[Lemma 3.11]{equidrin} since all coefficients of each $\Phi_a$ are $v$-adic integers). Since $R$ is a PID and $R^r$ is a free $R$-module, then $I_r$ is a free $R$-module of rank $s\le r$. Let $\{(c_{j,1},\cdots,c_{j,r})\}_{1\le j\le s}$ be a basis for the free $R$-module $I_r$. For each $j=1,\dots,s$, we let
$$y_j:=\sum_{i=1}^r \Phi_{c_{j,i}}(x_i).$$
Note that each $|y_j|_v<1$ since $(c_{j,1},\cdots,c_{j,r})\in I_r$. Also, each $y_j$ is nonzero since the elements $x_1,\dots,x_r$ form a basis for the free $R$-module $\Gamma$, and not all $c_{j,i}$ are $0$ (for $i=1,\dots,r$).

We know that for $n$ sufficiently large, $(a_{n,1},\cdots,a_{n,r})\in I_r$, and thus each $\sum_{i=1}^r \Phi_{a_{n,i}}(x_i)$ is in the $\Phi(R)$-module generated by $y_1,\cdots,y_s$. So, if $s<r$, we are done by the induction hypothesis. Therefore, assume from now on that $s=r$. So, for each $n\in\N$ such that $|\sum_{i=1}^r \Phi_{a_{n,i}}(x_i)|_v<1$, there exist $b_{n,1},\cdots,b_{n,r}\in R$ such that
$$a_{n,i}=\sum_{j=1}^r b_{n,j} c_{j,i}\text{ for all }i=1,\dots,r.$$
Therefore $\sum_{i=1}^r \Phi_{a_{n,i}}(x_i)=\sum_{j=1}^r \Phi_{b_{n,j}}(y_j)$. Since each $|y_j|_v<1$, we get that for each $b\in R$ we have
\begin{equation}
\label{important equality from equidrin}
\Phi_b(y_j)=by_j+L(b,y_j)\text{ where }|L(b,y_j)|_v<|y_j|_v,
\end{equation}
since each coefficient of $\Phi_b$ is a $v$-adic integer.
Without loss of generality assume that
$$|y_1|_v=|y_2|_v=\cdots = |y_{\ell}|_v=|u|_v^m>|y_{\ell+1}|_v\ge \cdots |y_r|_v\text{ for some $1\le \ell\le r$,}$$
where $u$ is some fixed uniformizer of the maximal ideal of $\OO_v$, and $m\in\N$.
Using \eqref{important equality from equidrin} we conclude that
$$\sum_{j=1}^r \Phi_{b_{n,j}}(y_j)=\sum_{j=1}^{\ell}b_{n,j}y_j+L(n,\overline{y}),$$
where $|L(n,\overline{y})|_v<|u|_v^m$.
For each $j=1,\dots,\ell$, we let $d_j$ be the first nonzero coefficient when expanding $y_j$ with respect to $u$ inside $K_v$. So,
$$y_j=d_j\cdot u^m + \text{ larger powers of $u$},$$
with $|d_j|_v=1$ for each $j=1,\dots,\ell$. Thus
$$\sum_{j=1}^r \Phi_{b_{n,j}}(y_j)=\left(\sum_{j=1}^{\ell} b_{n,j}d_j\right)\cdot u^m + L_1(n,\overline{y}),$$
where $|L_1(n,\overline{y})|_v<|u|_v^m$. Now, since
$$\sum_{i=1}^r \Phi_{a_{n,i}}(x_i)=\sum_{j=1}^r \Phi_{b_{n,j}}(y_j)\to 0\text{ as $n\to \infty$},$$
we conclude that for all $n\in\N$ sufficiently large we have that $|\sum_{j=1}^{\ell} b_{n,j}d_j|_v<1$. For each $j=1,\dots,\ell$, we let $e_j$ be the reduction of $d_j$ modulo $v$, and since $\F_p(t)\subseteq\F$ embeds naturally into the residue field at the place $v$, we conclude that
\begin{equation}
\label{linear dependence relation}
\sum_{j=1}^{\ell}e_jb_{n,j}=0,
\end{equation}
for $n$ sufficiently large. Since each $d_j$ is a $v$-adic unit, we conclude that each $e_j$ is nonzero. The set $S$ of tuples $(b_1,\dots,b_r)$ for which $\sum_{j=1}^{\ell} e_j b_j=0$ is a submodule of $R^r$ of rank $r-1$. Let $\{(f_{j,1},\dots,f_{j,r})\}_{1\le j\le r-1}$ be generators of $S$, and let
$\Gamma_S\subseteq K_{\Phi}$ be the $\Phi(R)$-module spanned by
$$\sum_{i=1}^r \Phi_{f_{j,i}}(y_i)\text{ for each $j=1,\dots,r-1$.}$$
By the induction hypothesis on the rank of the $R$-module $\Gamma_S$ (which is less than $r$), we know that $\Gamma_S$ is discrete $v$-adically. Since for $n$ sufficiently large we have that all $\sum_{i=1}^r \Phi_{a_{n,i}}(x_i)=\sum_{j=1}^r \Phi_{b_{n,j}}(y_j)$ are in $\Gamma_S$, we are done.
\end{proof}

\begin{proof}[Proof of Theorem~\ref{adelic result generic characteristic}.]
Let $\Gamma_0$ be the $\Phi(R)$-submodule of $K$ spanned by the projections of $\Gamma$ on each coordinate of $K_{\Phi}^g$. Since $\Gamma$ is finitely generated, then also $\Gamma_0$ is finitely generated.
We let $v$ be a place in $\Omega$ at which all coefficients of $\Phi_t$ and also each of the finitely many generators  of $\Gamma_0$ are integral. In particular, this means that \emph{all} elements of $\Gamma$ are integral at the place $v$, and also that each coefficient of each $\Phi_a$ is a $v$-adic integer. Note that all but finitely many places $v\in\Omega$ satisfy these conditions.
Proposition~\ref{discrete subgroups} yields that $\Gamma_0$ (and thus $\Gamma_0^g$) is discrete $v$-adically. Therefore $\Gammabar=\Gamma$ and so, $V(K)\cap\Gamma=V(\A)\cap\Gammabar$, as desired.
\end{proof}

%#######################################################################
%#######################################################################

\section{Proof of Theorem~\ref{adelic result special characteristic} when $\dim(V)=0$}
\label{first part}

In this Section, we work under the hypotheses from Theorem~\ref{adelic result special characteristic}. So, $\Phi$ is a Drinfeld module of special characteristic such that $\Phi[t^{\infty}](K^{\sep})$ is finite. In addition, we assume $\Phi$ has positive modular transcendence degree.
First we will prove several preliminary results which describe the topology induced on each finitely generated $\Phi(R)$-module $\Gamma$ by the ad\`{e}lic topology.

We let $\Omega_0$ be the set of all places $v\in\Omega$ at which all coefficients of $\Phi_t$ are integral, while the first and the last nonzero coefficients of $\Phi_t$ are $v$-adic units. Clearly, $\Omega\setminus\Omega_0$ is finite. Note that by our assumption, we obtain that for \emph{all} nonzero $a\in R$, each coefficient of $\Phi_a$ is a $v$-adic integer, while the first and the last coefficients of $\Phi_a$ are $v$-adic units.

\begin{prop}
\label{the quotient is Hausdorff}
For each $v\in\Omega_0$ and for each nonzero $a\in R$, the quotient $K_v^g/\Phi_a \left(K_v^g\right)$ is Hausdorff.
\end{prop}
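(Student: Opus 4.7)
The plan is to prove that $\Phi_a(K_v^g)$ is a closed subgroup of $K_v^g$. For a topological group $G$ and any subgroup $H$, the quotient $G/H$ is Hausdorff if and only if $H$ is closed, so this closure statement is exactly what is needed. Because $\Phi_a$ acts diagonally on $K_v^g$, we have $\Phi_a(K_v^g) = \bigl(\Phi_a(K_v)\bigr)^g$, and a product of closed sets is closed in the product topology; consequently it suffices to prove that $\Phi_a(K_v)$ is closed in $K_v$.

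To do this, I will show that $\Phi_a\colon K_v\to K_v$ is a proper continuous map of locally compact Hausdorff spaces; since proper continuous maps between such spaces are closed, this will immediately give the desired conclusion. Write $\Phi_a(x) = \sum_{i=d^*}^{D^*} c_i x^{p^i}$. Since $v\in\Omega_0$, each coefficient $c_i$ lies in $\OO_v$ and both $c_{d^*}$ and $c_{D^*}$ are $v$-adic units. For any $x\in K_v$ with $|x|_v > 1$, the ultrametric inequality forces
\[
\bigl|c_{D^*}\, x^{p^{D^*}}\bigr|_v \;=\; |x|_v^{p^{D^*}}
\;>\; |x|_v^{p^i} \;\ge\; \bigl|c_i\, x^{p^i}\bigr|_v
\qquad \text{for every } i<D^*,
\]
so that $|\Phi_a(x)|_v = |x|_v^{p^{D^*}}$ on the region $|x|_v>1$. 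In particular $|\Phi_a(x)|_v \to \infty$ as $|x|_v\to\infty$, which means that the $\Phi_a$-preimage of any bounded subset of $K_v$ is bounded. Combined with the continuity of $\Phi_a$ and the fact that $K_v$ is a local field (so closed bounded sets are compact), this yields that the preimage of every compact set is compact, i.e.\ $\Phi_a$ is proper.

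Assembling the steps: $\Phi_a(K_v)$ is closed in $K_v$, hence $\Phi_a(K_v^g) = \Phi_a(K_v)^g$ is closed in $K_v^g$, and therefore $K_v^g/\Phi_a(K_v^g)$ is Hausdorff. The only delicate point is the verification of properness, and this is precisely where the definition of $\Omega_0$ — demanding that the highest-degree coefficient of $\Phi_a$ be a $v$-adic unit — is indispensable: without it one could not conclude $|\Phi_a(x)|_v\to\infty$, and the image could conceivably fail to be closed.
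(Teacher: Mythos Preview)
Your reduction to showing that $\Phi_a(K_v)$ is closed is correct, and your computation that $|\Phi_a(x)|_v = |x|_v^{p^{D^*}}$ for $|x|_v>1$ is fine. The gap is in the sentence ``$K_v$ is a local field (so closed bounded sets are compact).'' In this paper $K$ is a function field over $\F$, and the residue field $\F_v$ at each $v\in\Omega$ is a \emph{finite extension of $\F=\F_p(t)$}, hence infinite. Consequently $K_v$ is a complete discretely valued field with infinite residue field: it is \emph{not} locally compact, and closed bounded subsets (for instance $\OO_v$ itself) are not compact. Your properness argument therefore collapses at exactly the point where you pass from ``preimage of bounded is bounded'' to ``preimage of compact is compact,'' and even if one granted properness in the inverse-image-of-compact sense, the implication ``proper $\Rightarrow$ closed'' you invoke requires local compactness of the target.

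The paper's proof gets around the lack of compactness by exploiting additivity. Given $\Phi_a(x_n)\to y$, one looks at the differences $z_n=x_{n+1}-x_n$, so $\Phi_a(z_n)\to 0$, and then uses Hensel's Lemma (after factoring out the inseparable part when $t\mid a$) to show each $z_n$ is $v$-adically close to some torsion point $u_n\in\Phi[a]$. Since $\Phi[a]$ is a \emph{finite} set, one can pass to a subsequence along which the $u_n$ are constant; along that subsequence $(x_{n_i})$ is Cauchy, hence convergent by completeness, and its limit $x$ satisfies $\Phi_a(x)=y$. The finiteness of $\Phi[a]$ is the substitute for the missing compactness of $\OO_v$.
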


\begin{proof}
Let $v\in\Omega_0$, and let $a\in R$ be nonzero.
It suffices to show that $K_v/\Phi_a(K_v)$ is Hausdorff, which is equivalent with showing that $\Phi_a(K_v)$ is closed in $K_v$. For this, we need to show that if $\Phi_a(x_n)\to y$ as $n\to \infty$ for some $x_n\in K_v$ and $y\in K_v$, then actually $y\in \Phi_a(K_v)$.

Let $z_n:=x_{n+1}-x_n$. We have that $\Phi_a(z_n)\to 0$ as $n\to\infty$. We claim that the distance between $z_n$ and the nearest torsion point from $\Phi[a]$ tends to $0$ with respect to the $v$-adic topology.

If $t\nmid a$, this is an immediate consequence of Hensel's Lemma. Indeed, reducing modulo $v$, we obtain that the reduction $\overline{z_n}$ modulo $v$ of $z_n$ is a root of the reduced polynomial $\overline{\Phi_a}$ modulo $v$. Now, since $\Phi_a'(x)=a_0\ne 0$ (where $a_0\in\F_p$ such that $t\mid a-a_0$), and $|a_0|_v=1$, we conclude using Hensel's Lemma that there exists a torsion point $u_n\in\Phi[a](K_v)$ such that $|z_n-u_n|_v<1$. Note also that each torsion point is integral at each place $v$ in $\Omega_1$ due to our assumptions regarding the $v$-adic absolute values of the coefficients of $\Phi_t$.
Then
$$|z_n-u_n|_v=|a_0(z_n-u_n)|_v=|\Phi_a(z_n-u_n)|_v=|\Phi_a(z_n)|_v\to 0,\text{ as desired.}$$
Now, assume $t\mid a$. Let $a=c_m t^m+\cdots +c_n t^n$ (with $1\le m\le n$), where each $c_i\in\F_p$, and $c_m,c_n\ne  0$. Letting $\Phi_t(x)=\sum_{j=\ell}^D \gamma_jx^{p^j}$, where $\gamma_{\ell},\gamma_D\ne 0$, then
$$\Phi_a(x)=c_m\gamma_{\ell}^{(p^{m\ell}-1)/(p^{\ell}-1)}x^{p^{m\ell}}+ \sum_{j=m\ell+1}^{nD}\delta_j x^{p^j}. $$
We let $f:=\Phi_a^{1/p^{m\ell}}\in K^{1/p^{m\ell}}[x]$. It is immediate to see that $f(z_n)\to 0$ in the $v$-adic topoogy (by abuse of language, we call also $v$ the induced valuation on $K_v^{1/p^{m\ell}}$ extending $v$). Also, $f(x)$ is a separable additive polynomial in $K^{1/p^{m\ell}}[x]$ whose derivative equals $\delta_0:=c_m\gamma_{\ell}^{(p^{m\ell}-1)/(p^{(m+1)\ell}-p^{m\ell})}$. By our definition of the set $\Omega_0$, we know that $\delta_0$ is a $v$-adic unit. Therefore, by Hensel's Lemma, just as before, we conclude that the distance from $z_n$ to the nearest root $u_n\in K_v^{1/p^{m\ell}}$ of $f(x)=0$ (which is also a torsion point in $\Phi[a]$) converges to $0$ in the $v$-adic topology.

So, for each $n\in\N$, there exists a suitable torsion point $z'_n\in \Phi[a]\left(K_v^{1/p^{m\ell}}\right)$ such that if $y_n=x_n+z'_n$, then the sequence $\{y_n\}_{n\in\N}$ is convergent in $K_v^{1/p^{m\ell}}$. Because $\Phi[a]$ is a finite set, it means that we can extract an infinite sequence $\{n_i\}_{i\in\N}$ of positive integers such that $z'_{n_i}=z'_{n_{i+1}}$ for all $i\in\N$. Hence the subsequence $\{x_{n_i}\}_{i\in\N}$ is convergent in $K_v$; since $K_v$ is a complete space, then $\{x_{n_i}\}_{i\in\N}$ converges to a point $x$ such that $\Phi_a(x)=y$, as desired.
\end{proof}

Let $\Gamma\subseteq \bG_a^g(K)$ be a finitely generated $\Phi(R)$-submodule; we also assume that $\Gamma$ is full in $K$. Let $\Omega_1\subseteq \Omega_0$ be the set of all places $v$ at which the finitely many generators of $\Gamma\subseteq \bG_a^g(K)$ are integral. In particular, this means that all points of $\Gamma$ are $v$-adic integral for $v\in\Omega_1$; also note that $\Omega\setminus\Omega_1$ is finite.
Without loss of generality (see also Proposition~\ref{essential lemma}) we may assume $\A_K$ is the restricted product $\prod_{v\in\Omega_1} (K_v,\OO_v)$.

\begin{prop}
\label{adelic topology is given by powers of t}
Let $\Gamma$ be a full, finitely generated $\Phi(R)$-submodule of $K^g$, and let $v\in\Omega_1$. Then the $v$-adic topology on $\Gamma$ is at least as strong as the topology induced by all subgroups $\Phi_{t^n}(\Gamma)$ for $n\in\N$.
\end{prop}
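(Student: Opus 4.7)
The plan is to show that, for each fixed $m\in\N$, the subgroup $\Phi_{t^m}(\Gamma)$ is open in the $v$-adic topology on $\Gamma$: every sequence $\{x_n\}\subseteq\Gamma$ with $x_n\to 0$ $v$-adically must lie eventually in $\Phi_{t^m}(\Gamma)$. Fullness makes this feasible, giving $\Phi_{t^m}(\Gamma)=\Gamma\cap\Phi_{t^m}(K^g)$, so it suffices to show that small elements of $\Gamma$ lie in the image of $\Phi_{t^m}$ applied to $K^g$. I will induct on the $R$-rank $r$ of $\Gamma$ modulo its (finite, by the hypothesis that $\Phi[t^\infty](K^{\sep})$ is finite) torsion subgroup.

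For the base case $r=1$, write $\Gamma=Rx_0+\Gamma_{\tor}$ with $x_0$ non-torsion and $x_n=\Phi_{a_n}(x_0)$. Following the ideal argument from Proposition~\ref{discrete subgroups}, form $I=\{a\in R\col|\Phi_a(x_0)|_v<1\}=cR$, so $a_n=b_nc$ for $n$ large with $w:=\Phi_c(x_0)$ satisfying $|w|_v<1$. The decisive new feature in special characteristic, absent from the generic-characteristic setting, is that $\Phi_t$ is a strict $v$-adic contraction on the maximal ideal: since $v\in\Omega_0$, one has $|\Phi_{t^i}(y)|_v=|y|_v^{p^{di}}$ whenever $|y|_v<1$, where $d\ge 1$ is the lowest exponent in $\Phi_t$. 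Writing $b_n=\sum_i b_{n,i}t^i$ with $b_{n,i}\in\F_p$, this yields $|\Phi_{b_n}(w)|_v=|w|_v^{p^{d\cdot v_t(b_n)}}$, forcing $v_t(b_n)\to\infty$, hence $t^m\mid b_n$ (and so $t^m\mid a_n$) for $n$ large; thus $x_n\in\Phi_{t^m}(\Gamma)$.

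For the inductive step $r\ge 2$, let $x_1,\ldots,x_r$ generate the free part of $\Gamma$, form the $R$-submodule $I=\{(a_1,\ldots,a_r)\in R^r\col|\sum_i\Phi_{a_i}(x_i)|_v<1\}$ of rank $s\le r$ with basis $\{(c_{j,i})\}_{j=1}^s$, and set $y_j:=\sum_i\Phi_{c_{j,i}}(x_i)\in\Gamma$. If $s<r$, then $x_n$ lies eventually in the $R$-span of the $y_j$'s, whose full divisible hull inside $\Gamma$ is a full, finitely generated $\Phi(R)$-submodule of rank $<r$, and the induction hypothesis applies. If $s=r$, write $x_n=\sum_j\Phi_{b_{n,j}}(y_j)$; the strict-contraction property identifies the leading $v$-adic term of $\Phi_{b_{n,j}}(y_j)$ as $b_{n,j,0}y_j$ (the constant term of $b_{n,j}$ times $y_j$), so the convergence $x_n\to 0$ forces a cancellation relation $\sum_{j\in J}b_{n,j,0}e_j=0$ in $\F_v$, where $e_j\in\F_v$ are the residues of the leading coefficients of the $y_j$ in the maximal-norm class $J$. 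When the $e_j$ are $\F_p$-dependent, a Proposition~\ref{discrete subgroups}-style rank reduction produces a smaller-rank submodule containing $x_n$ and the induction hypothesis concludes; when they are $\F_p$-independent, all $b_{n,j,0}$ for $j\in J$ vanish, so $t\mid b_{n,j}$ there, and the additivity of $\Phi_t$ lets us extract a factor of $\Phi_t$ from the $J$-portion of $x_n$, while the remaining terms (involving smaller-norm $y_j$'s) form a strictly lower-rank sub-problem amenable to the induction.

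The main obstacle will be orchestrating these reductions to yield full $t^m$-divisibility: cancellation-driven rank reductions (à la Proposition~\ref{discrete subgroups}) address the rank aspect one relation at a time, while $\Phi_t$-factoring advances the $t$-divisibility one step at a time, and the two must be interleaved so that the recursion terminates with $t^m\mid b_{n,j}$ for every $j$ and every sufficiently large $n$. The positive modular transcendence degree of $\Phi$ enters in ensuring the leading coefficients $e_j\in\F_v$ behave generically enough (not all lying in a small $\F_p$-subspace that would stall the recursion), while the finiteness of $\Phi[t^\infty](K^{\sep})$ keeps torsion contributions bounded throughout.
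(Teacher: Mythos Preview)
Your approach is genuinely different from the paper's, and as you yourself acknowledge in the final paragraph, it is incomplete.  The paper does not induct on the rank of $\Gamma$ or perform any explicit norm analysis; instead it argues as follows.  Fix $n$, choose $m$ with $\Phi[t^\infty](K^{\sep})\subseteq\Phi[t^m]$, and use Proposition~\ref{the quotient is Hausdorff} to see that $K_v^g/\Phi_{t^{n+m}}(K_v^g)$ is Hausdorff.  Since $\Gamma/\Phi_{t^{n+m}}(\Gamma)$ is finite, its image in this Hausdorff quotient is discrete, so $\ker\bigl(\Gamma\to K_v^g/\Phi_{t^{n+m}}(K_v^g)\bigr)=\Gamma\cap\Phi_{t^{n+m}}(K_v^g)$ is $v$-adically open in $\Gamma$.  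For $y$ in this kernel one has $y=\Phi_{t^{n+m}}(x)$ with $x\in K_v^g$; but $x$ is a root of a polynomial over $K$, hence $x\in\Kbar^g\cap K_v^g\subseteq(K^{\sep})^g$, and the Galois-cocycle $x^\sigma-x$ lies in $\Phi[t^{n+m}](K^{\sep})^g\subseteq\Phi[t^m]^g$.  Thus $\Phi_{t^m}(x)\in K^g$, whence $\Phi_{t^m}(x)\in\Gamma$ by fullness, and $y=\Phi_{t^n}(\Phi_{t^m}(x))\in\Phi_{t^n}(\Gamma)$.

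There is a concrete gap in your inductive step.  In Proposition~\ref{discrete subgroups} the relation $\sum_j e_j b_{n,j}=0$ holds in $\F_v$ with the \emph{full} polynomials $b_{n,j}\in R=\F_p[t]\subseteq\F_v$, and this cuts out an $R$-submodule of $R^r$ of rank $\le r-1$.  In your special-characteristic setting the analogous relation $\sum_{j\in J}b_{n,j,0}\,e_j=0$ constrains only the constant terms $b_{n,j,0}\in\F_p$; the set of tuples $(b_j)\in R^r$ satisfying it contains $tR^r$ and so has full rank $r$, not $r-1$.  Hence no rank reduction occurs and the induction hypothesis does not apply.  Likewise, in the $\F_p$-independent case you extract $\Phi_t$ from the $J$-portion $x_n^J$, but the complementary part $x_n^{J^c}=\sum_{j\notin J}\Phi_{b_{n,j}}(y_j)$ need not tend to $0$ (you only know $|x_n^{J^c}|_v<|u|_v^m$), so you cannot feed it to the induction either.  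Finally, the positive modular transcendence degree plays no role whatsoever in this proposition; the paper's proof uses only the Hausdorff quotient, the inclusion $\Kbar\cap K_v\subseteq K^{\sep}$, finiteness of $\Phi[t^\infty](K^{\sep})$, and fullness of $\Gamma$.
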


\begin{proof}
All we need to show is that for each $n\in\N$, there exists an open subset $U_n$ of $K_v^g$ such that $U_n\cap \Gamma \subseteq \Phi_{t^n}(\Gamma)$.

Let $n\in\N$. We know that $\Phi[t^{\infty}](K^{\sep})$ is finite, which means that there exists $m\in\N$ such that $\Phi[t^{\infty}](K^{\sep})\subseteq \Phi[t^m]$. Let
$$\pi: \Gamma \lra K_v^g/\Phi_{t^{n+m}}(K_v^g)$$
be the composition of the natural injection of $\Gamma$ into $K_v^g$ followed by the canonical projection map on the quotient $K_v^g/\Phi_{t^{n+m}}(K_v^g)$; clearly, $\pi$ is a continuous map. According to Proposition~\ref{the quotient is Hausdorff}, the quotient $K_v^g/\Phi_{t^{n+m}}(K_v^g)$ has an induced Hausdorff topology, which means that there exists an open subset $U_n$ of $K_v^g$ such that $U_n\cap \Gamma=\ker(\pi)$ (we also use the fact that $\Gamma/\Phi_{t^{n+m}}(\Gamma)$ is finite, and hence the induced topology on $\Gamma/\Phi_{t^{n+m}}(\Gamma)$ as a subset of $K_v^g/\Phi_{t^{n+m}}(K_v^g)$ is the discrete topology).

Now, for any $y\in U_n\cap \Gamma$, there exists $x\in K_v^g\cap \Kbar^g$ such that $y=\Phi_{t^{n+m}}(x)$. However, $\Kbar\cap K_v\subseteq K^{\sep}$ (see \cite[Lemma 3.1]{Poonen-Voloch}); so, $x\in (K^{\sep})^g$. Now, for any automorphism $\sigma\in \Gal(K^{\sep}/K)$, we have that $\Phi_{t^{n+m}}(x^{\sigma}-x)=0$ (since $y=\Phi_{t^{n+m}}(x)\in K^g$), which means that $x^{\sigma}-x\in (\Phi[t^{n+m}](K^{\sep}))^g$. So, in particular $x^{\sigma}-x\in (\Phi[t^m](K^{\sep}))^g$, which means that $\Phi_{t^m}(x)\in K^g$. However, since $\Phi_{t^m}(x)\in K^g$ and $\Phi_{t^n}(\Phi_{t^m}(x))=y\in\Gamma$, while $\Gamma$ is a full $\Phi(R)$-submodule in $K$, we conclude that also $\Phi_{t^m}(x)\in\Gamma$, and thus $y=\Phi_{t^n}(\Phi_{t^m}(x))\in \Phi_{t^n}(\Gamma)$. Therefore
$U_n\cap\Gamma\subseteq \Phi_{t^n}(\Gamma)$, as desired.
\end{proof}

\begin{prop}
\label{the closure is the same}
Let $\Gamma\subseteq K^g$ be a finitely generated $\Phi(R)$-submodule which is full in $K$. For each $v\in\Omega_1$ we let $\Gamma_v$ be the closure of $\Gamma$ in $K_v^g$. Then $\Gamma_v\cap K^g=\Gamma$.
\end{prop}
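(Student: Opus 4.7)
The plan is to show that any $y \in \Gamma_v \cap K^g$ must already lie in $\Gamma$. Pick such a $y$ and choose a sequence $\{y_k\}_{k\in\N} \subseteq \Gamma$ with $y_k \to y$ in $K_v^g$. To apply Proposition~\ref{adelic topology is given by powers of t} to the difference sequence $\{y_k - y\}$, I first enlarge $\Gamma$ to $\Gamma'$, defined as the full divisible hull inside $K^g$ of the finitely generated $\Phi(R)$-module $\Gamma + \Phi(R)\cdot y$. Invoking the Wang-type structure theorem recalled after Definition~\ref{full subgroup} (the divisible hull of a finitely generated $\Phi(R)$-submodule of $K^g_\Phi$ is again finitely generated, since $K^g_\Phi$ is a finite torsion module plus a free module over the PID $R$), we see that $\Gamma'$ is finitely generated and full in $K$, with $\Gamma \subseteq \Gamma'$ and $y \in \Gamma'$.

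Now Proposition~\ref{adelic topology is given by powers of t} applies to $\Gamma'$ at the place $v \in \Omega_1$: for each $n \in \N$, there is an open neighborhood $U_n \subseteq K_v^g$ of $0$ with $U_n \cap \Gamma' \subseteq \Phi_{t^n}(\Gamma')$. Since $y_k - y \to 0$ in $K_v^g$ and $y_k - y \in \Gamma'$, for each $n$ there is some $k(n) \in \N$ with $y - y_k \in \Phi_{t^n}(\Gamma')$ whenever $k \ge k(n)$. Reducing modulo $\Gamma$, the class $\overline{y} := y + \Gamma \in \Gamma'/\Gamma$ lies in $\Phi_{t^n}(\Gamma'/\Gamma) = t^n\cdot(\Gamma'/\Gamma)$ for every $n$.

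It then suffices to show that $\bigcap_{n \in \N} t^n \cdot (\Gamma'/\Gamma) = 0$, which forces $\overline{y} = 0$, i.e.\ $y \in \Gamma$. The key observation is that $\Gamma'/\Gamma$ is torsion-free as an $R$-module under $\Phi$: if $\Phi_a(x + \Gamma) = 0$ for some nonzero $a \in R$, then $\Phi_a(x) \in \Gamma$, and fullness of $\Gamma$ in $K^g$ forces $x \in \Gamma$. Being finitely generated and torsion-free over the PID $R$, $\Gamma'/\Gamma$ is then a free $R$-module of finite rank, so the standard fact that $\bigcap_n t^n R^s = 0$ gives $\bigcap_n t^n(\Gamma'/\Gamma) = 0$. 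The main subtlety is the initial enlargement step, namely verifying that $\Gamma'$ is still finitely generated so that Proposition~\ref{adelic topology is given by powers of t} is even applicable to it; once this is in place, everything else is a direct combination of Proposition~\ref{adelic topology is given by powers of t}, the PID structure of $R$, and the fullness hypothesis on $\Gamma$.
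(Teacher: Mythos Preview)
Your proof is correct and follows the same overall strategy as the paper: enlarge $\Gamma$ to the full divisible hull $\Gamma'$ of $\Gamma + \Phi(R)\cdot y$, then invoke Proposition~\ref{adelic topology is given by powers of t} for $\Gamma'$ to see that $y-y_k\in\Phi_{t^n}(\Gamma')$ for every $n$. Your finishing step --- passing to the quotient $\Gamma'/\Gamma$, observing it is finitely generated and torsion-free (hence free) over the PID $R$ by fullness of $\Gamma$, and concluding via $\bigcap_n t^n(\Gamma'/\Gamma)=0$ --- is a cleaner and more conceptual packaging than the paper's explicit computation, which instead picks a single nonzero $b=t^{j_0}b_0$ with $\Phi_b(\Gamma')\subseteq \Gamma+\Phi(R)\cdot y$ and unwinds the relation at $j=j_0+1$ to produce a nonzero $c\in R$ with $\Phi_c(y)\in\Gamma$.
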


\begin{proof}
One inclusion is obvious. Now, let $y\in\Gamma_v\cap K^g$. We let $\Gamma_1$ be the full divisible hull in $K$ of the finitely generated $\Phi(R)$-module $\Gamma_0$ spanned by $\Gamma$ and $y$. There exists an infinite sequence of distinct elements $x_n\in\Gamma$ such that $x_n\to y$ in $K_v^g$. So, by Proposition~\ref{adelic topology is given by powers of t}, we have that for all $j\in\N$, there exists $n_j\in\N$ such that
\begin{equation}
\label{for each j}
x_{n_j}-y=\Phi_{t^j}(y_j)\text{ for some $y_j\in\Gamma_1$.}
\end{equation}
Since $\Gamma_1$ is finitely generated as a $\Phi(R)$-module, and $\Gamma_1$ is the full divisible hull in $K$ of $\Gamma_0$, we conclude that there exists a nonzero $b\in R$ such that $\Phi_b(\Gamma_1)\subseteq \Gamma_0$. We assume $b=t^{j_0}\cdot b_0$, for some $j_0\in\N$, and some $b_0\in R$ which is not divisible by $t$. Now, we use \eqref{for each j} for $j=j_0+1$, and obtain
\begin{equation}
\label{j=j_0+1}
\Phi_{b_0}(x_{n_j})-\Phi_{b_0}(y)=\Phi_{b_0}\left(\Phi_{t^{j_0+1}}(y_j)\right).
\end{equation}
Because $\Phi_{b_0t^{j_0}}(y_j)\in\Gamma_0$, we conclude that there exist $z_j\in\Gamma$ and $c_j\in R$ such that $\Phi_{b_0t^{j_0}}(y_j)=z_j+\Phi_{c_j}(y)$. Therefore, \eqref{j=j_0+1} reads
$$\Phi_{b_0}(x_{n_j})-\Phi_{b_0}(y)=\Phi_t(z_j)+\Phi_{tc_j}(y).$$
Hence, $\Phi_{tc_j+b_0}(y)=\Phi_{b_0}(x_{n_j})-\Phi_t(z_j)\in\Gamma$. Since $t\nmid b_0$, we get that $tc_j+b_0\ne 0$, which yields that $y\in\Gamma$ because $\Gamma$ is full in $K$.
\end{proof}

\begin{prop}
\label{topology is given by prime to t}
The topology induced on $\Gamma$ by the ad\`{e}lic topology is at least as strong as the one given by all subgroups $\Phi_a(\Gamma)$ for $a\in R$ relative prime to $t$.
\end{prop}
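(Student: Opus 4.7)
The plan is to repeat the proof of Proposition~\ref{adelic topology is given by powers of t} with $\Phi_{a^{m+1}}$ in place of $\Phi_{t^{n+m}}$. Fix $a\in R$ coprime to $t$. Since every element of $\Gamma$ is automatically integral at each $v\in\Omega_1$, it suffices to find, at one place $v\in\Omega_1$, an open $U_v\subseteq K_v^g$ satisfying $U_v\cap\Gamma\subseteq\Phi_a(\Gamma)$; the ad\`elic neighborhood $U:=U_v\times \prod_{w\in\Omega_1\setminus\{v\}}\OO_w^g$ of $0$ will then automatically satisfy $U\cap\Gamma\subseteq\Phi_a(\Gamma)$.

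The one genuinely new ingredient needed is the finiteness of $\Phi[a^{\infty}](K^{\sep}):=\bigcup_n \Phi[a^n](K^{\sep})$. This is precisely where the positive modular transcendence degree hypothesis of Theorem~\ref{adelic result special characteristic} is used: it forces $\Phi$ to be non-isotrivial, and classical results (see~\cite{Sc} and~\cite{Poonen}) then ensure that the full separable torsion $\Phi_{\tor}(K^{\sep})$ is finite, so in particular so is $\Phi[a^{\infty}](K^{\sep})$. I then pick $m\in\N$ with $\Phi[a^{\infty}](K^{\sep})\subseteq \Phi[a^m]$.

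With $m$ in hand, the rest is a direct transcription. Proposition~\ref{the quotient is Hausdorff} applied to $a^{m+1}$ makes $K_v^g/\Phi_{a^{m+1}}(K_v^g)$ Hausdorff; the continuous composition $\pi\colon\Gamma\hookrightarrow K_v^g\twoheadrightarrow K_v^g/\Phi_{a^{m+1}}(K_v^g)$ has image a subquotient of $\Gamma/\Phi_{a^{m+1}}(\Gamma)$, which is finite because $\Gamma$ is finitely generated as a $\Phi(R)$-module and $R/a^{m+1}R$ is finite. Hence $\pi(\Gamma)$ is discrete in the Hausdorff target, so there is an open $U_v\subseteq K_v^g$ with $U_v\cap\Gamma\subseteq\Phi_{a^{m+1}}(K_v^g)\cap\Gamma$. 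For $y\in U_v\cap\Gamma$, write $y=\Phi_{a^{m+1}}(x)$ with $x\in K_v^g$; since $\Kbar\cap K_v\subseteq K^{\sep}$, we have $x\in(K^{\sep})^g$, and for each $\sigma\in\Gal(K^{\sep}/K)$ the element $x^\sigma-x$ lies in $\Phi[a^{m+1}](K^{\sep})^g\subseteq\Phi[a^m]^g$. Hence $\Phi_{a^m}(x)\in K^g$. Since $\Phi_a(\Phi_{a^m}(x))=y\in\Gamma$, the fullness of $\Gamma$ then gives $\Phi_{a^m}(x)\in\Gamma$, and therefore $y\in\Phi_a(\Gamma)$.

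The only step that genuinely departs from the proof of Proposition~\ref{adelic topology is given by powers of t} is the very first one, namely the finiteness of $\Phi[a^{\infty}](K^{\sep})$, and this is the main obstacle; without it one cannot absorb the Galois cocycle $\sigma\mapsto x^\sigma-x$ into a fixed power of $\Phi_a$ and leverage fullness. Everything else — the Hausdorff quotient, the discreteness of the image of $\Gamma$, and the descent from $(K^{\sep})^g$ to $K^g$ — is a direct analogue of the earlier argument.
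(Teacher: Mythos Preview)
Your argument contains a fatal error at precisely the point you flag as the ``one genuinely new ingredient'': the claim that $\Phi[a^{\infty}](K^{\sep})$ is finite is \emph{false} whenever $a$ is coprime to $t$.  Indeed, since $t\nmid a$, the constant term of $a$ in $\F_p[t]$ is a nonzero element of $\F_p$, so $\Phi_a'(x)$ equals that constant and $\Phi_a$ is a \emph{separable} additive polynomial.  The same holds for every power $a^n$.  Consequently every root of $\Phi_{a^n}(X)=0$ is separable over $K$, i.e.\ $\Phi[a^n]\subseteq K^{\sep}$ for all $n$, and therefore $\Phi[a^{\infty}](K^{\sep})=\Phi[a^{\infty}](\Kbar)$, which is infinite.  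No hypothesis on modular transcendence degree or non-isotriviality can change this; the results you cite control torsion over $K$ (or over fields of bounded degree), not over $K^{\sep}$.  Without the finiteness, you cannot choose an $m$ with $\Phi[a^{m+1}](K^{\sep})\subseteq\Phi[a^m]$, and the Galois descent step $\Phi_{a^m}(x)\in K^g$ collapses: the cocycle $\sigma\mapsto x^{\sigma}-x$ lands in all of $\Phi[a^{m+1}]$, and applying $\Phi_{a^m}$ only pushes it into $\Phi[a]$, not to $0$.

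This is exactly why the paper treats the $t$-power case and the prime-to-$t$ case by completely different methods.  The inseparability of $\Phi_t$ is what makes the hypothesis ``$\Phi[t^{\infty}](K^{\sep})$ is finite'' a reasonable one, and this is what drives Proposition~\ref{adelic topology is given by powers of t}.  For $a$ coprime to $t$, the paper instead exploits the separability of $\Phi_a$ directly: by Hensel's Lemma $\Phi_a(\OO_v^g)$ is open in $K_v^g$, and the obstruction to a given nonzero coset of $\Phi_a(\Gamma)$ lying in $\Phi_a(\OO_v^g)$ for \emph{every} $v\in\Omega_1$ is ruled out by a Hilbertian argument---the reduction of $\Phi_a(X)-y$ must fail to have a linear factor over $\F_v$ for infinitely many $v$.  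The two cases are genuinely different, and your attempt to unify them via the Proposition~\ref{adelic topology is given by powers of t} template cannot work.
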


\begin{proof}
Let $a\in R$ relative prime to $t$. We have to show that there exists an open subset $U_a$ in the ad\`{e}lic topology such that $U_a\cap\Gamma \subseteq \Phi_a(\Gamma)$. Since $\Phi_a$ is separable (more precisely, $\Phi_a'$ is a constant $v$-adic unit for all $v\in\Omega$), using Hensel's Lemma, we obtain that $\Phi_a(\OO_v^g)$ is an open subgroup of $K_v^g$ for each $v\in\Omega_1$. Therefore it suffices to show that there exist finitely many places $v_i\in\Omega_1$ (with $1\le i\le n_a$ for some $n_a\in\N$) such that for all $y\in\Gamma$ satisfying $y\in\Phi_a(\OO_{v_i}^g)$ for $1\le i\le n_a$, we have $y\in \Phi_a(\Gamma)$. Since $\Gamma$ is full in $K$, we only have to show that $y\in \Phi_a(K^g)$.

Now, assume there is no such finite set of places $v_i$ as above. This means that for any finite subset $S$ of places contained in $\Omega_1$, we may find $y=y_S\in \Gamma\setminus \Phi_a(\Gamma)$ such that $y\in \Phi_a(\OO_v^g)$ for each $v\in S$.  Note that the quotient $\Gamma/\Phi_a(\Gamma)$ is finite (since $\Gamma$ is a finitely generated $\Phi(R)$-module and $a\ne 0$); let $y_1,\dots,y_s\in \Gamma$ be a set of representatives for the nonzero cosets of $\Gamma/\Phi_a(\Gamma)$.

For each coset $y_i + \Phi_a(\Gamma)$ for $i=1,\dots,s$, we let $S_{y_i}$ be the set of places $v\in\Omega_1$ such that $y_i\notin \Phi_a(\OO_v^g)$; note that our definition does not depend on the particular choice of the class representative for the particular coset of $\Phi_a(\Gamma)$ in $\Gamma$.  If each $S_{y_i}$ is nonempty, then for each $i=1,\dots,s$, choose $v_i\in S_{y_i}$ and let $S:=\{v_1,\dots,v_s\}$. By our assumption, if $y\in \Phi_a(\OO_{v_i}^g)$ for each $i=1,\dots,s$, then $y\in \Phi_a(\Gamma)$, as desired.

So, from now on assume there exists a coset $y+\Phi_a(\Gamma)$ of $\Gamma$ (as above) such that $S_y$ is empty, i.e. for \emph{all}  places $v\in\Omega_1$, we have $y\in \Phi_a(\OO_v^g)$. So, even though $y\notin \Phi_a(K^g)$ (note that $\Gamma$ is full in $K$), for all but finitely many places $v\in\Omega$ we still have that $y\in \Phi_a(\OO_v^g)$. We will show next that this is impossible.
At the expense of passing to the coordinates in $\bG_a^g$ of each point, we may assume $g=1$.
 In particular this means that the polynomial $\Phi_a(X)-y\in K[X]$ has no linear irreducible factor over $K$. Since $\F$ is a finite extension of $\F_p(t)$, then $\F$ is a Hilbertian field (see \cite[Ch. 9]{Lang}). This yields that there exist infinitely many places $v\in\Omega_1$ such that the reduction modulo $v$ of the polynomial $\Phi_a(X)-y$ has no irreducible linear factor over $\F_v$. This contradicts our assumption that for all but finitely many places $v\in\Omega_1$ we have $y\in \Phi_a(\OO_v)$ since that would mean that for all but finitely many places $v\in\Omega_1$, the reduction modulo $v$ of $y$ belongs to $\Phi_a(\F_v)$ (since $\F_v$ is the residue field of $K$ and also of $\OO_v$ modulo $v$). This concludes the proof of our Proposition~\ref{topology is given by prime to t}.
\end{proof}

Combining Propositions~\ref{adelic topology is given by powers of t} and \ref{topology is given by prime to t} we obtain the following result.
\begin{prop}
\label{adelic topology is given by all a}
The induced ad\`{e}lic topology on $\Gamma$ is at least as strong as the topology given by all subgroups $\Phi_a(\Gamma)$ for all $a\in R$.
\end{prop}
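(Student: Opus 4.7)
The plan is to combine the two previous propositions by writing an arbitrary nonzero $a \in R$ in the form $a = t^n \cdot b$ with $n \geq 0$ and $\gcd(b, t) = 1$. Such a factorization is immediate in $R = \F_p[t]$: if $a = c_m t^m + \cdots + c_M t^M$ with $c_m \neq 0$, set $n := m$ and $b := a/t^n$, which then has nonzero constant term and is therefore coprime to $t$. The case $a = 0$ gives $\Phi_a(\Gamma) = \{0\}$ and is vacuous for the topology statement, so it may be safely excluded.

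First I would apply Proposition~\ref{adelic topology is given by powers of t} at any fixed $v \in \Omega_1$ to produce a $v$-adic open subset $U_1 \subseteq K_v^g$, which extends to an open subset $U_1 \times \prod_{w \neq v} \OO_w^g$ of $\A_K^g$ (having the same intersection with $\Gamma$, since $\Gamma \subseteq \OO_w^g$ for all $w \in \Omega_1$), such that $U_1 \cap \Gamma \subseteq \Phi_{t^n}(\Gamma)$. Next I would apply Proposition~\ref{topology is given by prime to t} to the element $b$, producing an open subset $U_2 \subseteq \A_K^g$ with $U_2 \cap \Gamma \subseteq \Phi_b(\Gamma)$. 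Setting $U := U_1 \cap U_2$ then gives an adèlic open neighborhood of $0$ satisfying $U \cap \Gamma \subseteq \Phi_{t^n}(\Gamma) \cap \Phi_b(\Gamma)$.

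The remaining ingredient is the purely algebraic identity $\Phi_{t^n}(\Gamma) \cap \Phi_b(\Gamma) = \Phi_a(\Gamma)$. One inclusion is obvious from $\Phi_a = \Phi_b \circ \Phi_{t^n}$. For the reverse inclusion, since $\gcd(b, t^n) = 1$ in the PID $R$, there exist $u, v \in R$ with $ub + vt^n = 1$, so for any $x = \Phi_b(m_1) = \Phi_{t^n}(m_2)$ with $m_1, m_2 \in \Gamma$ one computes
\begin{equation*}
m_2 = \Phi_{ub}(m_2) + \Phi_{vt^n}(m_2) = \Phi_b\bigl(\Phi_u(m_2) + \Phi_v(m_1)\bigr) \in \Phi_b(\Gamma),
\end{equation*}
and writing $m_2 = \Phi_b(m_3)$ then yields $x = \Phi_{t^n}(\Phi_b(m_3)) = \Phi_a(m_3) \in \Phi_a(\Gamma)$. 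I do not anticipate any real obstacle at this stage: the two preceding propositions have already handled the two coprime parts of $a$ separately, and the Bezout identity in the PID $R$ glues them back together. It is worth noting that the fullness of $\Gamma$ plays no role in the combination step itself; it was already exploited upstream inside each of the two ingredient propositions.
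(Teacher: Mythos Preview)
Your proposal is correct and is precisely the natural way to combine Propositions~\ref{adelic topology is given by powers of t} and~\ref{topology is given by prime to t}; the paper itself gives no details beyond the single sentence ``Combining Propositions~\ref{adelic topology is given by powers of t} and~\ref{topology is given by prime to t} we obtain the following result,'' and your argument via the factorization $a=t^n b$ together with the B\'ezout identity $\Phi_{t^n}(\Gamma)\cap\Phi_b(\Gamma)=\Phi_a(\Gamma)$ is exactly what that sentence is pointing to.
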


\begin{prop}
\label{adelic torsion is regular torsion}
Let $\Gamma\subseteq K_{\Phi}^g$ be any finitely generated $\Phi(R)$-submodule which is full in $K$. Then $\Gammabar_{\tor}=\Gamma_{\tor}$, i.e. all torsion points contained in $\Gammabar$ are actually contained in $\Gamma$.
\end{prop}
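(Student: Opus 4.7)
The plan is the following. The inclusion $\Gamma_{\tor} \subseteq \Gammabar_{\tor}$ is immediate, so it suffices to show that every $y \in \Gammabar$ satisfying $\Phi_a(y)=0$ for some nonzero $a \in R$ lies in $\Gamma_{\tor}[a]$. Since $\Gamma$ is full in $K$, every $K^g$-rational $a$-torsion point already lies in $\Gamma_{\tor}[a]$; hence the strategy reduces to producing an explicit $\tau \in \Gamma_{\tor}[a]$ with $y = \tau$ in $\A_K^g$. I will use Propositions~\ref{the quotient is Hausdorff}, \ref{the closure is the same}, and \ref{adelic topology is given by all a} in sequence.

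Fix such a $y$ and choose $x_n \in \Gamma$ with $x_n \to y$ adélically, so $\Phi_a(x_n) \to 0$ adélically. For each nonzero $b \in R$, Proposition~\ref{adelic topology is given by all a} implies that $\Phi_b(\Gamma)$ is open (hence closed) in the induced adélic topology on $\Gamma$, so the finite quotient $\Gamma/\Phi_b(\Gamma)$ is discrete. Therefore the class of $x_n$ in $\Gamma/\Phi_b(\Gamma)$ eventually stabilizes at that of some $\tau_b \in \Gamma$, and $x_n - \tau_b \in \Phi_b(\Gamma)$ for $n$ large. Applying $\Phi_a$, the difference $\Phi_a(x_n) - \Phi_a(\tau_b) = \Phi_a(x_n - \tau_b) \in \Phi_{ab}(\Gamma)$ converges adélically to $-\Phi_a(\tau_b)$; since $\Phi_{ab}(\Gamma)$ is also closed in $\Gamma$, this forces $\Phi_a(\tau_b) = \Phi_{ab}(z_b)$ for some $z_b \in \Gamma$. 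Then $\sigma_b := \tau_b - \Phi_b(z_b)$ lies in $\ker(\Phi_a) \cap K^g$, hence in $\Gamma_{\tor}[a]$ by fullness of $\Gamma$; moreover $x_n \equiv \sigma_b \pmod{\Phi_b(\Gamma)}$ for $n$ large.

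The elements $\sigma_b$ range over the finite set $\Gamma_{\tor}[a]$. A direct calculation gives $\Phi_b(\Gamma) \cap \Gamma_{\tor} = \Phi_b(\Gamma_{\tor})$, which vanishes whenever $b$ lies in the nontrivial annihilator ideal $\mathfrak{a} \subseteq R$ of the finite module $\Gamma_{\tor}$. Combined with the Bézout-type identity $\Phi_{b_1}(\Gamma) + \Phi_{b_2}(\Gamma) = \Phi_{\gcd(b_1,b_2)}(\Gamma)$, this shows that $\sigma_{b_1} - \sigma_{b_2} \in \Phi_{\gcd(b_1, b_2)}(\Gamma) \cap \Gamma_{\tor}$ vanishes whenever $b_1, b_2 \in \mathfrak{a}$; so all such $\sigma_b$ coincide with a single element $\tau \in \Gamma_{\tor}[a]$. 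For any nonzero $b' \in R$, taking $b = c b'$ with an arbitrary nonzero $c \in \mathfrak{a}$ yields $b \in \mathfrak{a}$ and $\Phi_b(\Gamma) \subseteq \Phi_{b'}(\Gamma)$, so $x_n - \tau \in \Phi_{b'}(\Gamma)$ for $n$ large.

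The main obstacle is the final step: upgrading the resulting $\Phi(R)$-adic convergence of $x_n - \tau$ to $0$ into genuine adélic convergence, i.e., concluding $y = \tau$. Localizing at any $v \in \Omega_1$, the element $y_v - \tau \in K_v^g$ is the $v$-adic limit of elements of $\Phi_b(\Gamma) \subseteq \Phi_b(K_v^g)$ for every nonzero $b$; Proposition~\ref{the quotient is Hausdorff} ensures $\Phi_b(K_v^g)$ is closed, so $y_v - \tau \in \bigcap_{b\neq 0} \Phi_b(K_v^g)$. Combined with $\Phi_a(y_v - \tau) = 0$, writing $y_v - \tau = \Phi_b(\eta_b)$ forces $\eta_b \in \Phi[ab](K_v)$, hence separable over $K$ by \cite[Lemma~3.1]{Poonen-Voloch}. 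The finiteness of $\Phi[t^{\infty}](K^{\sep})$ then bounds the $t$-power component of such infinitely divisible $a$-torsion points, while the fact that $y_v - \tau \in \Gamma_v$ combined with Proposition~\ref{the closure is the same} controls the coprime-to-$t$ component (this is where the positive modular transcendence degree hypothesis on $\Phi$ enters, preventing torsion from escaping into $\Fbar$). A book-keeping argument on these finite sets of torsion points forces $y_v - \tau = 0$ at every $v \in \Omega_1$, giving $y = \tau \in \Gamma_{\tor}$ and completing the proof.
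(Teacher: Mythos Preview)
Your argument is correct and rather elegant up through the construction of the single torsion element $\tau\in\Gamma_{\tor}[a]$ with $x_n-\tau\in\Phi_{b'}(\Gamma)$ eventually for every nonzero $b'\in R$.  The difficulty is exactly where you flag it: passing from this $\Phi(R)$-adic convergence $x_n-\tau\to 0$ to the ad\`elic conclusion $y=\tau$.  Your place-by-place sketch does handle the $t$-primary part cleanly via the finiteness of $\Phi[t^{\infty}](K^{\sep})$, but the prime-to-$t$ part is a genuine gap.  At a fixed $v\in\Omega_1$ you obtain $y_v-\tau\in\Phi[a'](K_v)\cap\bigcap_n\Phi_{a'^n}(K_v)$ with $\gcd(a',t)=1$; however, nothing you have cited rules out a nonzero infinitely $a'$-divisible torsion point in $K_v$ (equivalently, a nonzero Galois-fixed vector in the $a'$-adic Tate module of the reduction $\bar\Phi$ over $\F_v$).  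Proposition~\ref{the closure is the same} does not apply because $y_v-\tau$ lies only in $(K^{\sep})^g$, not in $K^g$, and the positive modular transcendence degree hypothesis plays no role here (indeed the paper's proof of this proposition does not use it).  The phrase ``book-keeping argument'' is not a proof.

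The paper takes a structurally different route that sidesteps this obstacle.  Rather than fixing a torsion element of $\Gammabar$ and chasing it place by place, one first splits $\Gamma=\Gamma_0\oplus\Gamma_1$ where $\Gamma_0$ consists of those $\gamma\in\Gamma$ whose reduction modulo every $v\in\Omega_1$ is torsion for $\bar\Phi$, and $\Gamma_1$ is free.  The point is that on $\Gamma_0$ the ad\`elic topology and the $\Phi(R)$-adic topology \emph{coincide}: Proposition~\ref{adelic topology is given by all a} gives one direction, and for the other one uses that for each $v$ there is some $b_v$ with $\Phi_{t^n b_v}(\Gamma_0)\to 0$ $v$-adically (inseparability of $\Phi_t$ plus the defining property of $\Gamma_0$).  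Hence $\bar{\Gamma_0}\cong\Gamma_0\otimes_R\hat R$, while $\Gamma_1$ is discrete so $\Gammabar=\bar{\Gamma_0}\oplus\Gamma_1$; the torsion computation then reduces to $R/aR\cong\hat R/a\hat R$.  In effect, the missing converse implication in your argument (``$\Phi(R)$-adic convergence $\Rightarrow$ ad\`elic convergence'') is exactly what the $\Gamma_0$-decomposition supplies.
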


\begin{proof}
The proof is similar to the proof of \cite[Lemma 3.7]{Poonen-Voloch}. Let $\Gamma_0$ be the subgroup of all $y\in\Gamma$ such that for all $v\in\Omega_1$ the reduction of $y$ modulo $v$ is in $\left(\bar{\Phi}_{\tor}(\F_v)\right)^g$, where $\bar{\Phi}$ is the reduction of $\Phi$ modulo $v$. It is immediate to see that $\Gamma_0$  is full in $K$, and thus it contains $\Gamma_{\tor}$.

Now, since $\Gamma$ is a finitely generated $\Phi(R)$-submodule, there exist finitely many places $v_i\in\Omega_1$ for $i=1,\dots,\ell$ such that $\Gamma_0$ is the kernel of the natural reduction map
$$\Gamma\lra \prod_{i=1}^{\ell} \left(\F_{v_i}/\bar{\Phi}_{\tor}(\F_{v_i})\right)^g.$$
Therefore $\Gamma/\Gamma_0$ embeds into the free $\Phi(R)$-submodule
$$\prod_{i=1}^{\ell} \left(\F_{v_i}/\bar{\Phi}_{\tor}(\F_{v_i})\right)^g.$$
We conclude that there exists a free submodule $\Gamma_1\subseteq \Gamma$ such that $\Gamma=\Gamma_0\oplus\Gamma_1$. We claim that
\begin{equation}
\label{structure adelic}
\Gammabar=\bar{\Gamma_0}\oplus \Gamma_1
\end{equation}
Indeed, for each sequence $\{x_n+y_n\}_{n\in\N}$ where $x_n\in\Gamma_0$ and $y_n\in\Gamma_1$ which converges in $\A_K^g$, we obtain that for $m$ and $n$ sufficiently large we have that $y_n-y_m$ and $x_m-x_n$ have the same reduction modulo $v_i$ for each $i=1,\dots,\ell$. Therefore, $y_n-y_m\in\Gamma_0$, and thus $y_n=y_m$ since $\Gamma=\Gamma_0\oplus \Gamma_1$.

So $\Gammabar=\bar{\Gamma_0}\oplus \Gamma_1$ and since $\Gamma_1$ is a free $\Phi(R)$-module, we obtain that $\Gammabar_{\tor}=\left(\bar{\Gamma_0}\right)_{\tor}$. We claim that the induced ad\`{e}lic topology on $\Gamma_0$ is the one given by all subgroups $\Phi_a(\Gamma_0)$ for all $a\in R$. By Proposition~\ref{adelic topology is given by all a} we know that the ad\`{e}lic topology induces on $\Gamma_0$ a topology at least as strong as the one given by all subgroups $\Phi_a(\Gamma_0)$. Now we check the converse statement. Indeed, for each  $y\in\Gamma_0$ and for each $v\in\Omega_1$, there exists a nonzero $a_v\in R$ such that $\Phi_{a_v}(y)$ reduces to $0$ modulo $v$. Then $\Phi_{t^na_v}(y)$ converges $v$-adically to $0$  as $n\to\infty$, because $v$ is a place of integrality for all coefficients of $\Phi_{t^na_v}$ and moreover $\Phi_t$ has no linear or constant term. Moreover, let $\{y_i\}_{1\le i\le s}$ be a finite set of generators for $\Gamma_0$ as a $\Phi(R)$-module, and let $a_{i,v}\in R$ be as above for each corresponding $y_i$ and each place $v\in \Omega_1$. Letting $b_v:=\prod_{i=1}^{s} a_{i,v}$, we obtain that
$$\Phi_{t^nb_v}(y)\text{ coverges $v$-adically to $0$, as $n\to\infty$, for all $y\in\Gamma_0$.}$$
This proves our claim about the induced ad\`{e}lic topology on $\Gamma_0$. Thus
\begin{equation}
\label{important bar}
\bar{\Gamma_0}\isomto \Gamma_0\tensor_R \hat{R},
\end{equation}
where $\hat{R}$ is the restricted direct product of all completions of $R$ at its maximal ideals. Since $R/aR\isomto \hat{R}/a\hat{R}$ for each nonzero $a\in R$, we conclude that
$$\Gammabar_{\tor}=\left(\bar{\Gamma_0}\right)_{\tor}\isomto (\Gamma_0)_{\tor}=\Gamma_{\tor},$$
which concludes the proof of Proposition~\ref{adelic torsion is regular torsion}.
\end{proof}

The following result is an immediate consequence of our proof of Proposition~\ref{adelic torsion is regular torsion}.
\begin{cor}
\label{the quotient is Hausdorff bis}
For each nonzero $a\in R$, we have $\Gamma/\Phi_a(\Gamma)\isomto \Gammabar/\Phi_a(\Gammabar)$.
\end{cor}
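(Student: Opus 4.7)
The plan is to recycle the structural results established inside the proof of Proposition~\ref{adelic torsion is regular torsion}, where essentially all of the substance already appears; the corollary is just the consequence one reads off after tensoring with $R/aR$.

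First I would invoke the direct sum decomposition $\Gamma = \Gamma_0 \oplus \Gamma_1$ with $\Gamma_1$ a free $\Phi(R)$-module, together with the companion decomposition of the ad\`{e}lic closure
\[
\Gammabar = \bar{\Gamma_0} \oplus \Gamma_1,
\]
both of which were produced in the cited proof. Since $\Phi_a$ acts diagonally on a direct sum, the quotient by $\Phi_a$ respects the decomposition. Thus to compare $\Gamma/\Phi_a(\Gamma)$ with $\Gammabar/\Phi_a(\Gammabar)$ it suffices to compare the $\Gamma_0$-summands, i.e.\ to show
\[
\Gamma_0/\Phi_a(\Gamma_0) \isomto \bar{\Gamma_0}/\Phi_a(\bar{\Gamma_0}).
\]

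For this I would use the isomorphism \eqref{important bar}, namely $\bar{\Gamma_0} \isomto \Gamma_0 \tensor_R \hat{R}$, which was also established in the proof of Proposition~\ref{adelic torsion is regular torsion}. Recall that under the $\Phi(R)$-module structure the action of $\Phi_a$ is identified with multiplication by $a$; hence
\[
\bar{\Gamma_0}/\Phi_a(\bar{\Gamma_0}) \isomto \Gamma_0 \tensor_R \left(\hat{R}/a\hat{R}\right).
\]
Combining this with the isomorphism $R/aR \isomto \hat{R}/a\hat{R}$ noted at the end of that proof yields
\[
\bar{\Gamma_0}/\Phi_a(\bar{\Gamma_0}) \isomto \Gamma_0 \tensor_R (R/aR) \isomto \Gamma_0/\Phi_a(\Gamma_0),
\]
which is what we wanted.

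There is no real obstacle here; the entire corollary is a formal unwinding of the computation that already closed the proof of Proposition~\ref{adelic torsion is regular torsion}. The only point one needs to be careful about is confirming that the two natural maps (inclusion of $\Gamma$ into $\Gammabar$ descending to quotients, and the tensor-product isomorphism) are compatible, so that the resulting bijection $\Gamma/\Phi_a(\Gamma) \to \Gammabar/\Phi_a(\Gammabar)$ is indeed the one induced by the inclusion $\Gamma \hookrightarrow \Gammabar$; but this compatibility is automatic from the construction of the isomorphism in \eqref{important bar}.
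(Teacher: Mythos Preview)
Your proposal is correct and is essentially the paper's own proof, written out in more detail: the paper simply says the result follows from \eqref{structure adelic} and \eqref{important bar} together with $\hat{R}/a\hat{R}\isomto R/aR$, which is precisely the argument you give.
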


\begin{proof}
The claimed result follows immediately from \eqref{structure adelic} and \eqref{important bar} using that $\hat{R}/a\hat{R}\isomto R/aR$.
\end{proof}

Now we are ready to prove Theorem~\ref{adelic result special characteristic} when $\dim(V)=0$.

\begin{prop}
\label{zero-dimensional case}
Theorem~\ref{adelic result special characteristic} holds if $V$ is a zero-dimensional subvariety of $\bG_a^g$.
\end{prop}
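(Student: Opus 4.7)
The plan is to prove the nontrivial inclusion $V(\A_K)\cap\Gammabar \subseteq V(K)\cap\Gamma$; the reverse inclusion is automatic.  Since $V$ is zero-dimensional over $K$, the intersection $V(K)\cap\Gamma$ is a finite set, which we enumerate as $\{Q_1,\dots,Q_m\}$, and at each $v\in\Omega_1$ any $y\in V(\A_K)$ has $v$-component in the finite algebraic set $V(K_v)\subseteq V(\Kbar)$.

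First I would reduce to the torsion-reducing part of $\Gamma$: from the proof of Proposition~\ref{adelic torsion is regular torsion} we have $\Gammabar = \overline{\Gamma_0}\oplus\Gamma_1$ with $\Gamma_1\subseteq K^g$, so any $y\in V(\A_K)\cap\Gammabar$ splits as $y=y_0+y_1$.  Translating $V$ by $-y_1$ (which remains a zero-dimensional $K$-subvariety of $\bG_a^g$) and replacing $\Gamma$ by $\Gamma_0$ allows me to assume $y\in\overline{\Gamma_0}\isomto\Gamma_0\tensor_R\hat{R}$.

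Next, for each nonzero $a\in R$, Corollary~\ref{the quotient is Hausdorff bis} gives a canonical isomorphism $\Gamma_0/\Phi_a(\Gamma_0)\isomto\overline{\Gamma_0}/\Phi_a(\overline{\Gamma_0})$, so the image of $y$ mod $\Phi_a(\overline{\Gamma_0})$ is represented by some $x_a\in\Gamma_0$ satisfying $y - x_a\in\Phi_a(\overline{\Gamma_0})$, and these representatives are coherent under divisibility in $R$.  Evaluating at each $v\in\Omega_1$ and invoking Proposition~\ref{the quotient is Hausdorff} (which gives that $\Phi_a(K_v^g)$ is closed in $K_v^g$) yields $y_v - x_a\in\Phi_a(K_v^g)$; since $y_v\in V(\Kbar)\cap K_v^g$ is algebraic and $x_a\in K^g$, this is a strong $\Phi_a$-divisibility constraint on the algebraic element $y_v - x_a$.

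The crucial step is to show that $x_a\in V(K)$ for all sufficiently divisible $a$.  Because $\Gamma_0$ is a finitely generated module over the PID $R$, one has $\bigcap_{a\ne 0}\Phi_a(\Gamma_0)=0$, so for $a$ sufficiently divisible the finitely many elements $Q_1,\dots,Q_m$ lie in distinct cosets of $\Phi_a(\Gamma_0)$.  The idea is then to combine the ensemble of local conditions $x_a\equiv y_v\pmod{\Phi_a(K_v^g)}$ over $v\in\Omega_1$ into the global condition $x_a\in V(K)$: the positive modular transcendence degree hypothesis ensures that $\Phi$ is not definable over $\F_p$, so the reductions $\bar\Phi$ mod $v$ vary genuinely as $v$ ranges over $\Omega_1$, supplying enough independent specializations to pin down $x_a$ in the finite algebraic set $V(K)$.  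Once $x_a\in V(K)\cap\Gamma=\{Q_1,\dots,Q_m\}$ for cofinally many $a$, coherence plus the separation of cosets forces $(x_a)$ to stabilize at some $Q_{j_0}$.

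The proof then concludes: once $x_a = Q_{j_0}$ for all sufficiently divisible $a$, we obtain $y - Q_{j_0}\in\bigcap_a\Phi_a(\overline{\Gamma_0})$, and under the isomorphism $\overline{\Gamma_0}\isomto\Gamma_0\tensor_R\hat{R}$ together with $\bigcap_{0\ne a\in R} a\hat{R} = 0$, this intersection vanishes, so $y=Q_{j_0}\in V(K)\cap\Gamma$.  The main obstacle is establishing $x_a\in V(K)$ for large $a$: converting the family of local $\Phi_a$-divisibility conditions into a global algebraic statement is the Drinfeld analogue of the key specialization step for semiabelian varieties in~\cite{Poonen-Voloch}, and is where both the zero-dimensionality of $V$ and the positive modular transcendence degree of $\Phi$ enter essentially.
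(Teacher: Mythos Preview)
Your proposal has a genuine gap at what you yourself flag as the ``crucial step'': you never actually prove that $x_a\in V(K)$ for sufficiently divisible $a$. The congruences $x_a\equiv y_v\pmod{\Phi_a(K_v^g)}$ say that $x_a$ and $y_v$ differ by something in the image of $\Phi_a$, not that $x_a$ is $v$-adically close to $y_v$, so there is no mechanism forcing $x_a$ to land in the finite set $V(K_v)$. Your appeal to ``reductions $\bar\Phi$ varying genuinely'' and ``independent specializations'' is a hand-wave, not an argument. Moreover, the positive modular transcendence degree hypothesis is a red herring here: in the paper it is used only in Section~\ref{second part} (for the uniform Mordell--Lang reduction), and plays no role whatsoever in the zero-dimensional case.

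The paper's proof takes a quite different and more direct route that avoids your obstacle entirely. Rather than trying to push approximations $x_a$ into $V(K)$, it first passes to a finite extension $L/K$ so that $V(\Kbar)=V(L)$ (checking that the hypothesis $\Phi[t^\infty](L^{\sep})$ finite survives, and that the conclusion for $L$ implies it for $K$ via Proposition~\ref{the closure is the same} and \cite[Lemma~3.2]{Poonen-Voloch}). After this reduction, for any $y=(y_v)\in V(\A_K)\cap\Gammabar$ each component $y_v$ already lies in $V(K)$. Proposition~\ref{adelic topology is given by powers of t} then shows $y_v-y_w\in\Phi_{t^j}(K^g)$ for all $j$, hence $y_v-y_w\in\Gamma_{\tor}$; by finiteness of $\Gamma_{\tor}$ some fixed $y_0\in V(K)$ occurs as $y_v$ for infinitely many $v$. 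Then $y-y_0$ lies in $\Gammabar_{\tor}=\Gamma_{\tor}$ by Proposition~\ref{adelic torsion is regular torsion}, and vanishes at infinitely many places, so $y=y_0$; finally $y_0\in K^g\cap\Gammabar=\Gamma$ by Proposition~\ref{the closure is the same}. The key move you are missing is the base-change making the components $y_v$ themselves $K$-rational, which eliminates any need to approximate.
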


\begin{proof}
Clearly, it suffices to show that $V(\A_K)\cap \Gammabar\subseteq V(K)\cap\Gamma$ since the reverse inclusion is obvious.

First we show that it suffices to prove our result when we replace $K$ by a finite extension $L$, and replace $\Gamma$ by its full divisible hull $\Gamma_L$ inside $L_{\Phi}^g$. Indeed, we denote by $\A_L$ the ad\`{e}les for the function field $L$ (with respect to the places which extend the places contained in $\Omega_1$), and we assume that
$$V(\A_L)\cap \bar{\Gamma_L}\subseteq V(L)\cap\Gamma_L.$$
Then
$$V(\A_K)\cap\Gammabar\subseteq V(\A_L)\cap \bar{\Gamma_L} \subseteq V(L)\cap \Gamma_L\subseteq V(L).$$
Therefore $V(\A_K)\cap\Gammabar\subseteq V(L)\cap V(\A_K)=V(K)$ because $L\cap\A_K=K$ by \cite[Lemma 3.2]{Poonen-Voloch}. So, $V(\A_K)\cap\Gammabar\subseteq K^g\cap \Gammabar=\Gamma$ by Proposition~\ref{the closure is the same}. Therefore
$$V(\A_K)\cap\Gammabar\subseteq V(K)\cap \Gamma\text{, as desired.}$$

Secondly, we show that for any finite extension $L$ of $K$, the hypothesis that $\Phi[t^{\infty}](L^{\sep})$ is finite holds. Indeed, this claim is obvious when $L$ is a separable extension of $K$; so it suffices to prove it when $L$ is a purely inseparable extension of $K$. Hence, assume $L=K^{1/p^r}$ for some positive integer $r$. Since $\Phi_t$ is inseparable, we obtain that $\Phi_{t^r}(L^{\sep})\subseteq K^{\sep}$. Because $\Phi[t^{\infty}](K^{\sep})$ is finite, we know that there exists a positive integer $m$ such that $\Phi[t^{\infty}](K^{\sep})\subseteq \Phi[t^m]$. Therefore, $\Phi[t^{\infty}](L^{\sep})\subseteq \Phi[t^{r+m}]$, as desired.

Now, since we proved that we may replace $K$ by a finite extension (and replace $\Gamma$ by its full divisible hull inside this extension), we may assume that $V(\Kbar)=V(K)$ (recall  our hypothesis is that $\dim(V)=0$).
Let $\{x_n\}_{n\in\N}\subseteq \Gamma$ be a sequence which converges ad\`{e}lically to a point $(y_v)_{v\in\Omega_1}\in V(\A_K)$, where $y_v\in V(K_v)$ for all places $v$. Actually, $y_v\in V(K)$ since $V$ is a zero dimensional variety whose algebraic points are all defined over $K$. So, we know that for all places $v\in\Omega_1$, we have
$$x_n\to y_v\text{ in the $v$-adic topology.}$$
We claim that $y_v-y_w\in\Gamma_{\tor}$ for any given two places $v$ and $w$ from $\Omega_1$. Since $\Gamma$ is full in $K$, and since $y_v,y_w\in K^g$, all we need to show is that $y_v-y_w$ is a torsion point for $\Phi$.
By Proposition~\ref{adelic topology is given by powers of t}, we conclude that for each $j\in \N$, there exists $n_j\in\N$ and  there exist $z_{v,j},z_{w,j}\in K^g$ such that
$$x_{n_j}-y_v=\Phi_{t^j}(z_{v,j})\text{ and }x_{n_j}-y_w=\Phi_{t^j}(z_{w,j}).$$
So, $y_v-y_w=\Phi_{t^j}(z_{w,j}-z_{v,j})\in \Phi_{t^j}(K^g)$. Since we can do this for \emph{all} $j\in\N$, and $K_{\Phi}^g$ is a direct product of a finite torsion submodule with a free submodule of rank $\aleph_0$, we conclude that $y_v-y_w\in\Phi_{\tor}(K)^g=\Gamma_{\tor}$, as claimed.

Because $\Gamma_{\tor}$ is a finite set, and because $y_v-y_w\in \Gamma_{\tor}$ for each pair $(v,w)\in\Omega_1\times \Omega_1$, we conclude that there exists $y\in V(K)$ and there exist infinitely many places $v\in \Omega_1$ such that $y=y_v$. Then, as shown above, the sequence $\{x_n-y\}_{n\in\N}$ converges  to an ad\`{e}lic  torsion point for $\Phi$. By Proposition~\ref{adelic torsion is regular torsion} applied to the full divisible hull in $K$ of the $\Phi(R)$-submodule generated by $\Gamma$ and $y$, we conclude that $\{x_n-y\}_{n\in\N}$ converges ad\`{e}lically to a torsion point $z_0\in\Phi_{\tor}(K)^g=\Gamma_{\tor}$. However, since the reduction of $z_0$ modulo $v$ equals $0$ for infinitely many places $v$, we conclude that $z_0=0$. Therefore, the sequence $\{x_n\}_{n\in\N}$ converges ad\`{e}lically to the point $y\in V(K)$. Using Proposition~\ref{the closure is the same}, we obtain that $y\in\Gamma$, which yields the desired conclusion: $V(\A_K)\cap\Gammabar=V(K)\cap\Gamma$.
\end{proof}

%#######################################################################
%#######################################################################
%#######################################################################

\section{A uniform Mordell-Lang version for Drinfeld modules}
\label{second part}

In this Section we continue with the same notation as in Section~\ref{first part}. Our main goal is to prove the following result, which follows from the unpublished notes of the second author.
\begin{prop}
\label{uniform DML}
Let $K$ be a function field over $\F$ of arbitrary finite transcendence degree, and let $\Phi$ be a Drinfeld module of special characteristic defined over $K$. Assume $\Phi$ has positive modular transcendence degree, and that $\Phi[t^{\infty}](K^{\sep})$ is finite. Let $\Gamma$ be a full,  finitely generated $\Phi(R)$-submodule of $K_{\Phi}^g$.
Let $V\subseteq\bG_a^g$ be an affine subvariety defined over $K$, and assume that $V$ contains no translate of a positive dimensional algebraic subgroup of $\bG_a^g$.

Then there exists a zero-dimensional subvariety $W\subseteq V$ defined over $K$ such that $V(\A_K)\cap\Gammabar\subseteq W(\A_K)$.
\end{prop}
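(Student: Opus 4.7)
The plan is to take $W:=V(K)\cap\Gamma$ as the candidate zero-dimensional subvariety. By Theorem~\ref{AWZ} applied under the hypothesis that $V$ contains no translate of a positive-dimensional algebraic subgroup of $\bG_a^g$ and that $\Gamma$ is a finitely generated $\Phi(R)$-submodule of $K^g$, the set $W$ is finite; and since $\Gamma\subseteq K^g$, every point of $W$ is $K$-rational, so $W$ is indeed a zero-dimensional subvariety of $V$ defined over $K$. The content of the proposition then becomes: every $y\in V(\A_K)\cap\Gammabar$ must lie in $W(\A_K)$, i.e.\ each local component $y_v$ must already equal an element of the finite set $W\subseteq K^g$.

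I would fix such a $y=(y_v)_{v\in\Omega_1}$ together with an approximating sequence $\{x_n\}\subseteq\Gamma$ converging adelically to $y$. The starting point is the observation, provided by Proposition~\ref{adelic topology is given by all a} together with Corollary~\ref{the quotient is Hausdorff bis}, that for each nonzero $a\in R$ there is a single coset $c_a+\Phi_a(\Gamma)$ that eventually contains the $x_n$, and this coset represents the class of $y$ in the finite quotient $\Gammabar/\Phi_a(\Gammabar)\cong\Gamma/\Phi_a(\Gamma)$. Writing $y=c_a+\Phi_a(u_a)$ with $u_a\in\Gammabar$, the containment $y\in V(\A_K)$ translates into $u_a\in V_a(\A_K)\cap\Gammabar$, where $V_a:=\Phi_a^{-1}(V-c_a)$ is again an affine $K$-subvariety of $\bG_a^g$. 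Since $\Phi_a$ is a finite isogeny of $\bG_a^g$, any translate of a positive-dimensional algebraic subgroup contained in $V_a$ would push forward under $\Phi_a$ to one contained in $V-c_a$, hence in $V$; the hypothesis on $V$ rules this out, so $V_a$ inherits the same geometric property and Theorem~\ref{AWZ} ensures that $V_a(K)\cap\Gamma$ is again finite.

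The main obstacle is to upgrade this family of finite intersections, one for each $a\in R\setminus\{0\}$, into a single zero-dimensional $W$ that controls $y_v$ at every place simultaneously. Following the unpublished notes of the second author, I would arrange the varieties $V_{t^k}$ into an inverse system as $k\to\infty$ and invoke the compactness of the ``torsion-at-every-place'' piece $\overline{\Gamma_0}\cong\Gamma_0\otimes_R\hat R$ of $\Gammabar$---obtained from the splitting $\Gamma=\Gamma_0\oplus\Gamma_1$ established in the proof of Proposition~\ref{adelic torsion is regular torsion}, after subtracting off the eventually constant $\Gamma_1$-component of $\{x_n\}$---to extract compatible preimages $u_{t^k}\in V_{t^k}(\A_K)\cap\overline{\Gamma_0}$ whose limiting data pin $y_v$ down to a $K$-rational point of $V$ that already lies in $\Gamma$. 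The technical heart of the argument is thus the uniformity: the finite sets $V_a(K)\cap\Gamma$ must stabilize in an appropriate sense as $a$ varies, so that a single zero-dimensional $W$ works for all $a$. I expect this to be proved either by a descending-chain argument on the auxiliary varieties $V_a$ or, as is typical for uniformity statements in the Mordell--Lang circle of problems, by a model-theoretic compactness argument in a suitable theory of difference fields expanding the Drinfeld module structure. Once the uniformity is in place, the conclusion $y\in W(\A_K)$ follows at once, and the proposition is proved.
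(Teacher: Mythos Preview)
Your proposal correctly identifies uniformity as the crux and even names model-theoretic compactness as the likely tool, but the argument as written has a genuine gap and also takes an unnecessarily hard route.

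First, the choice $W:=V(K)\cap\Gamma$ forces you to prove, for each place $v$, that the local component $y_v$ already lies in this particular finite subset of $K^g$. That is essentially the full content of Theorem~\ref{adelic result special characteristic}, not just the reduction step asserted by the proposition. The paper does \emph{not} take $W=V(K)\cap\Gamma$; it allows $W$ to be a possibly larger zero-dimensional $K$-variety, and only afterwards (via Proposition~\ref{zero-dimensional case}) collapses $W(\A_K)\cap\Gammabar$ down to $V(K)\cap\Gamma$. By aiming directly at the smaller target you have made your task strictly harder.

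Second, and more seriously, the inverse-system argument you sketch (passing to $V_{t^k}$ and invoking compactness of $\overline{\Gamma_0}\cong\Gamma_0\otimes_R\hat{R}$) does not produce the needed uniformity. What you actually need---and what the paper isolates as Proposition~\ref{automatic uniformity}---is a bound of the form: there exist $m$ and $\ell$ such that for \emph{every} fully separable extension $L/K$ and \emph{every} $a\in\bG_a^g(L)$, the set $(a+V(L))\cap\Phi_{t^m}(\bG_a^g(L))$ has at most $\ell$ elements. The quantification over all translates $a$ and all fields $L$ is the point; this is what lets one apply the conclusion with $L=K_v$ at each place. The paper obtains this by a short compactness argument in the theory of separably closed fields, using the modularity of $\psi^\infty$ (Theorem~\ref{AWZ}) to derive a contradiction from a hypothetical failure. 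Once this uniform $m$ is in hand, one simply lists coset representatives $\gamma_1,\dots,\gamma_s$ for $\Gammabar/\Phi_{t^m}\Gammabar\cong\Gamma/\Phi_{t^m}\Gamma$, lets $W_i$ be the (at most $\ell$)-point variety capturing $V(L)\cap(\gamma_i+\Phi_{t^m}\bG_a^g(L))$, and sets $W=\bigcup W_i$. Your descending-chain and inverse-limit ideas are not wrong in spirit, but they do not substitute for this uniform-in-$L$ statement, and without it the step ``the finite sets $V_a(K)\cap\Gamma$ must stabilize'' is exactly the missing lemma rather than a consequence of what you have set up.
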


Proposition~\ref{uniform DML} allows us to reduce Theorem~\ref{adelic result special characteristic} to the case $\dim(V)=0$, which was already proven in Proposition~\ref{zero-dimensional case}.

In order to prove Proposition~\ref{uniform DML} we shall require a uniformity result used by Hrushovski in his proof \cite{Hr} of the positive characteristic function field Mordell-Lang conjecture but adapted here for Drinfeld modules. Our results are valid for any inseparable endomorphism $\psi$ of $\bG_a$ defined over $K$; for our application, we have $\psi=\Phi_t$. For each $n\in\N$ we denote by $\psi^n$ the $n$-th iterate of $\psi$ when composed with itself, and we extend the action of $\psi$ on $\bG_a^g$ for any $g\in\N$.

First we let $\mathcal{B}$ be a $p$-basis for $K$, i.e. a basis for $K$ as $K^p$-vector space.  We say that an extension of fields $L/K$ is \emph{fully separable} if $\mathcal{B}$ continues to be a $p$-basis for $L$.  With this definition, a subextension of a fully separable extension need not be fully separable itself.

For $L/K$ a fully separable, separably closed extension field of $K$, define $\psi^\infty (L) := \bigcap_{n \geq 0} \psi^{n}(\Ga(L)) \subseteq \Ga(L)$.

\begin{prop}
\label{automatic uniformity}
Given an algebraic subvariety $X \subseteq \bG_a^g$ having the property that $X$ does not contain the translate of any positive dimensional algebraic subgroup of $\Ga^g$, there are numbers $\ell$ and $m$ such that for any fully separable field extension $L/K$ and point $a \in \bG_a^g(L)$ the cardinality of $(a + X(L)) \cap \psi^m (\Ga^g(L))$ is at most $\ell$.
\end{prop}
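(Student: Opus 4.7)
The plan is a model-theoretic compactness argument that reduces the uniform bound to a non-uniform Mordell--Lang type assertion in fully separable, separably closed extensions of $K$. Suppose no $\ell$ and $m$ as claimed exist. The theory of fully separable extensions of $K$ is first-order axiomatizable in the language of rings augmented by constants for $K$ and for the elements of the chosen $p$-basis $\mathcal{B}$, and the condition $b\in\psi^{m}(\Ga^g)$ is existentially definable since $\psi^{m}$ is a polynomial. Consider the countable partial type $\pi(a,(b_i)_{i\in\N})$ asserting that the $b_i$ are pairwise distinct, that $b_i-a\in X$ for every $i$, and that $b_i\in\psi^{m}(\Ga^g)$ for every $i$ and every $m\in\N$. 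By the assumed failure of uniformity, every finite fragment of $\pi$ is realized in some fully separable extension of $K$, so by compactness $\pi$ is realized in some fully separable extension $L^*/K$; since a $p$-basis of a field remains a $p$-basis of any separable algebraic extension, we may further assume $L^*$ is separably closed. This yields $a^*\in \bG_a^g(L^*)$ together with an infinite sequence of distinct points of $(a^* + X(L^*))\cap \psi^\infty(L^*)^g$.

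It now suffices to establish the following non-uniform statement: for any fully separable, separably closed extension $L/K$ and any $a\in\bG_a^g(L)$, the intersection $(a + X(L))\cap \psi^\infty(L)^g$ is finite. I would invoke the analysis of the second author in \cite{Sc}: under the hypothesis that $\Phi$ has positive modular transcendence degree, $\psi^\infty$ is a type-definable subgroup in the theory of separably closed fields (with distinguished $p$-basis) which is \emph{modular} in the sense of geometric stability theory. Modularity forces the Zariski closure of any infinite subset of $\psi^\infty(L)^g$ to be a finite union of cosets of algebraic subgroups of $\bG_a^g$. Consequently, were $(a^* + X(L^*))\cap \psi^\infty(L^*)^g$ infinite, its Zariski closure would contain a coset $c+H$ with $H\subseteq\bG_a^g$ a positive-dimensional algebraic subgroup; then $a^* + X\supseteq c+H$ yields $X\supseteq (c-a^*)+H$, contradicting the hypothesis on $X$.

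The main obstacle is the non-uniform Mordell--Lang input used in the second paragraph. It must be valid for arbitrary translates $a\in\bG_a^g(L)$ in an arbitrary large fully separable, separably closed extension, not merely for $K$-rational data, and the modularity of $\psi^\infty$ has to be preserved under such extensions. This is precisely why the positive modular transcendence degree assumption on $\Phi$ enters: it excludes the isotrivial case, in which $\psi^\infty$ fails to be modular and the uniformity statement itself can fail. Given this modular Mordell--Lang input, the compactness reduction in the first paragraph is entirely routine.
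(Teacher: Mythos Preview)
Your argument is correct and is essentially the paper's own proof: negate the uniform bound, use compactness to realize infinitely many distinct points of $(a+X)\cap\psi^\infty$ in some fully separable, separably closed extension, and then invoke modularity of $\psi^\infty$ (Theorem~\ref{AWZ}) to find a coset of a positive-dimensional group inside a translate of $X$, contradicting the hypothesis on $X$. The only cosmetic difference is that the paper folds ``separably closed'' into the theory before applying compactness, while you pass to the separable closure afterward.
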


The proof of this proposition relies on the main dichotomy theorem on minimal groups type definable in separably closed fields, though in this particular case the Pillay-Ziegler~\cite{PZ} geometric argument using jet spaces may be employed rather than the Zariski geometries argument needed in general.  To apply the Pillay-Ziegler argument following their proof of the positive characteristic Mordell-Lang conjecture for ordinary abelian varieties, we need to know that the generic type of $\psi^\infty$ is \emph{very thin} in their language.  That is, if $L/K$ is a fully separable extension of $K$ and $a \in \psi^\infty(L)$ then the smallest fully separable subfield of $L$ containing $a$ and $K$ is finitely generated as a field over $K$.  The argument of~\cite[Lemma 6.4]{PZ}  in which it is shown that if $A$ is ordinary then $p^\infty A$ is very thin applies to $\psi^\infty$.  That is, we may write $\psi = \widetilde{\psi} \circ F$ where $F$ is a Frobenius map corresponding to some power of $p$ and $\widetilde{\psi}$ is separable.  Following the proof of \cite[Lemma 6.4]{PZ} replacing the Verschiebung by $\widetilde{\psi}$, one sees that $K(a)$ is already fully separable.

With this preamble, let us state precisely the required result and then explain how to deduce Proposition~\ref{uniform DML} from the main theorem.

\begin{thm}
\label{AWZ}
With our hypotheses on $\psi$, the type definable group $\psi^\infty$ is modular.  In particular, for any separably closed, fully separable extension field $L/K$, and any algebraic variety $Y \subseteq \bG_a^g$ defined over $L$, the set $Y(L) \cap \psi^\infty(L)^g$ is a finite union of translates of subgroups (possibly trivial) of $\psi^\infty(L)^g$.
\end{thm}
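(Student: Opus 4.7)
The plan is to deduce the theorem from the Pillay--Ziegler~\cite{PZ} analysis of type-definable subgroups of commutative algebraic groups in separably closed fields of characteristic $p$, adapted from the setting of abelian varieties to the additive group $\Ga$ equipped with the inseparable endomorphism $\psi = \Phi_t$. Work inside a sufficiently saturated separably closed extension $L/K$ with fixed $p$-basis $\mathcal{B}$, and view $\psi^\infty(L) = \bigcap_n \psi^n(\Ga(L))$ as a $\bigwedge$-definable subgroup of $\Ga(L)$. The statement to prove has two layers: modularity of $\psi^\infty$, and the ``Mordell--Lang'' consequence for intersections with algebraic varieties. The second layer is a standard model-theoretic consequence of the first: in a modular type-definable group every definable subset is a finite Boolean combination of cosets of type-definable subgroups, so intersecting the closed set $Y(L)$ with $\psi^\infty(L)^g$ and using that $Y$ is Zariski-closed yields the desired finite union of translates. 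The core of the argument is therefore to establish modularity.

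For modularity, I would follow the route sketched in the preamble. The first step is to certify that the generic type of $\psi^\infty$ is \emph{very thin} in the Pillay--Ziegler sense, so that their jet-space machinery applies. Using the factorization $\psi = \widetilde{\psi} \circ F$ with $F$ a power of Frobenius and $\widetilde{\psi}$ separable, one imitates the argument of~\cite[Lemma 6.4]{PZ} (replacing the Verschiebung by $\widetilde{\psi}$) to show that if $a \in \psi^\infty(L)$ then $K(a)/K$ is already fully separable, so the smallest fully separable subfield of $L$ containing $K$ and $a$ is finitely generated as a field over $K$. This is precisely the very thinness hypothesis needed.

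The second and principal step is the Pillay--Ziegler dichotomy applied to the very thin generic type of $\psi^\infty$: either this type is modular, or else it is non-orthogonal to the fixed field $\Fbar$ of the ambient separably closed field. This is the part I expect to be the main obstacle, since it requires a careful translation between the model-theoretic non-orthogonality of the generic type and an algebraic statement about $\Phi$. The positive modular transcendence degree hypothesis is exactly what kills the non-modular branch: if the generic type of $\psi^\infty$ were non-orthogonal to $\Fbar$, one could pull back this definable correspondence along $\psi$ to produce, after an $\Fbar$-linear change of variable $x \mapsto \gamma x$, a conjugate $\gamma^{-1}\Phi_t(\gamma x)$ of $\Phi_t$ with coefficients in $\Fbar$, forcing the modular transcendence degree of $\Phi$ to be zero and contradicting our hypothesis.

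With modularity in hand, the Mordell--Lang conclusion is routine. For any variety $Y \subseteq \bG_a^g$ defined over $L$, the set $Y(L) \cap \psi^\infty(L)^g$ is definable in the theory of separably closed fields, and modularity of the ambient $\bigwedge$-definable group $\psi^\infty(L)^g$ forces any such definable subset to be a finite Boolean combination of cosets of $\bigwedge$-definable subgroups. Using that $Y(L) \cap \psi^\infty(L)^g$ is closed inside $\psi^\infty(L)^g$ for the topology inherited from the Zariski topology on $\bG_a^g$, the Boolean combination collapses to a finite union of translates of subgroups of $\psi^\infty(L)^g$, completing the proof.
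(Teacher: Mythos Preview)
Your outline is essentially the one the paper itself indicates: the paper does not give a self-contained proof but cites Hrushovski~\cite{Hr} and the second author's Arizona Winter School notes~\cite{Sc}, while the paragraph preceding the theorem already sets up the Pillay--Ziegler jet-space route (very thinness via the factorization $\psi=\widetilde{\psi}\circ F$). Your sketch follows that Pillay--Ziegler alternative rather than the Zariski-geometries argument of~\cite{Hr}, but the paper explicitly allows both, and the structure---very thinness, dichotomy, exclusion of the field-type branch via the non-isotriviality hypothesis, then the standard deduction of Mordell--Lang from one-basedness---is correct.

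One technical slip to fix: in the dichotomy for separably closed fields, the ``other side'' is non-orthogonality to the field of constants $C=L^{p^\infty}=\bigcap_n L^{p^n}$, not to $\Fbar=\overline{\F_p(t)}$. Accordingly, non-orthogonality yields a definable isogeny between $\psi^\infty$ and a vector group over $C$, and unwinding this (using that the generic type of $\psi^\infty$ is based over $K$) shows that $\psi=\Phi_t$ is conjugate over $\Kbar$ to an endomorphism with coefficients in $\overline{\F_p}$, i.e.\ modular transcendence degree zero. Your phrasing ``coefficients in $\Fbar$'' would only give modular transcendence degree at most one, which is not a contradiction. Once you replace $\Fbar$ by the constants $C$ (and the descent target by $\overline{\F_p}$), the argument is fine.
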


The proof of Theorem~\ref{AWZ} follows identically as the proof of the main theorem of~\cite{Hr} and is sketched by the second author in the Arizona Winter School notes~\cite{Sc}.    As noted above, in this case one could also give a geometric proof using a variant of the Gauss map.

Given Theorem~\ref{AWZ}, let us show how to deduce Proposition~\ref{automatic uniformity}.   This argument also appears in~\cite{Hr}.
\begin{proof}[Proof of Proposition~\ref{automatic uniformity}.]
If no such bounds $\ell$ and $m$ existed, then the following set of sentences would be consistent:

\begin{itemize}
\item $L$ is a separably closed, fully separable extension of $K$. (This is expressible by a countable set of sentences in the language of rings augmented by constant symbols for the elements of $K$.)
\item $a \in \bG_a^g(L)$
\item $c_i \in \bG_a^g(L)$ (for $i \in {\mathbb N}$)
\item $c_i \neq c_j$ for $i \neq j$
\item $(\exists y \in \bG_a^g(L)) \psi^m (y) = c_i$ for each $i \in {\mathbb N}$ and $m \in {\mathbb N}$
\item $c_i \in a + X(L)$
\end{itemize}

By the compactness theorem, we obtain a fully separable, separably closed field extension $L/K$ and a point $a \in \bG_a^g(L)$ so that $(a + X(L)) \cap \psi^\infty(L)^g$ is infinite. By the modularity of $\psi^\infty(L)$ it follows that there is an infinite group $H \leq \psi^{\infty}(L)^g$ and a point $b \in \psi^\infty(L)^g$ such that $b + H \subseteq a + X(L)$.  But then $X$ contains the translate of the Zariski closure of $H$ by $(b -a)$ contrary to our hypothesis on $X$.
\end{proof}

Now we are ready to prove Proposition~\ref{uniform DML}.
\begin{proof}[Proof of Proposition~\ref{uniform DML}.]
Let $m$ and $\ell$ be the numbers given by Proposition~\ref{automatic uniformity} for $X=V$ and $\psi=\Phi_t$.  By Corollary~\ref{the quotient is Hausdorff bis}, the group $\overline{\Gamma}/\Phi_{t^m} \overline{\Gamma}$ is finite and each coset of $\Phi_{t^m} \overline{\Gamma}$ contains an element of $\Gamma$. Let $\gamma_1, \ldots, \gamma_s \in \Gamma$ so that $\overline{\Gamma} = \bigcup_{i=1}^s \gamma_i + \Phi_{t^m} \overline{\Gamma}$.    By Proposition~\ref{automatic uniformity}, for each $i$ there is a zero dimensional variety $W_i$ defined over $K$ for which $V(L) \cap (\gamma_i + \Phi_{t^m} \bG_a^g(L)) \subseteq W_i(L)$ for any fully separable extension field $L/K$; clearly we may assume $W_i\subset V$.  Let
$$W := \bigcup_{i=1}^s W_i.$$
Clearly each $K_v$ is a fully separable extension of $K$. So, if $b=(b_v)_{v\in\Omega_1} \in V(\A_K) \cap \overline{\Gamma}$, we have $b_v \in V(K_v) \cap (\bigcup_{i=1}^s \gamma_i + (\Phi_{t^m} \bG_a^g(K_v))) \subseteq W(K_v)$.  Hence, $b \in W({\mathbf A}_K)$ as desired.
\end{proof}

Now we are ready to prove Theorem~\ref{adelic result special characteristic} for any subvariety $V$ which contains no translate of a positive dimensional algebraic subgroup of $\bG_a^g$.

\begin{proof}[Proof of Theorem~\ref{adelic result special characteristic}.]
By Proposition~\ref{uniform DML}, there exists a zero-dimensional affine subvariety $W$ of $V$ also defined over $K$ such that $V(\A_K)\cap\Gammabar\subseteq W(\A_K)$. On the other hand, by Proposition~\ref{zero-dimensional case}, we have that $W(\A_K)\cap\Gammabar=W(K)\cap\Gamma$. So, $V(\A_K)\cap\Gammabar\subseteq W(\A_K)\cap\Gammabar=W(K)\cap\Gamma$.
On the other hand, since $W\subseteq V$, we conclude that $V(\A_K)\cap\Gammabar\subseteq V(K)\cap\Gamma$. But always $V(K)\cap\Gamma\subseteq V(\A_K)\cap\Gammabar$, which concludes the proof of Theorem~\ref{adelic result special characteristic}.
\end{proof}

%#######################################################################

%\bibliographystyle{plain}


\begin{thebibliography}{9}
\newcommand{\au}[1]{{#1},}
\newcommand{\ti}[1]{\textit{#1},}
\newcommand{\jo}[1]{{#1}}
\newcommand{\vo}[1]{\textbf{#1}}
\newcommand{\yr}[1]{(#1),}
\newcommand{\pp}[1]{#1.}
\newcommand{\ppx}[1]{#1,}
\newcommand{\pps}[1]{#1;}
\newcommand{\bk}[1]{{#1},}
\newcommand{\inbk}[1]{in: {#1}}
\newcommand{\xxx}[1]{{arXiv:#1.}}


\bibitem{de Jong}
\au{A.~J.~de Jong}
\ti{Smoothness, semi-stability and alterations}
\jo{Inst. Hautes \'{E}tudes Sci. Publ. Math}
\vo{83}
\yr{1996}
\pp{51--93}

\bibitem{Denis}
\au{L.~Denis}
\ti{G\'{e}om\'{e}trie diophantienne sur les modules de Drinfeld}
\inbk{The arithmetic of function fields}
(Columbus, OH, 1991)
\ppx{285--302}
de Gruyter, Berlin, 1992


\bibitem{Faltings}
\au{G.~Faltings}
\ti{The general case of S. Lang's theorem}
\inbk{Barsotti symposium in Algebraic Geometry}
 (Abano Terme, 1991)
\ppx{175--182}
Academic Press, San Diego, 1994.


\bibitem{IMRN}
\au{D.~Ghioca}
\ti{The Mordell-Lang Theorem for Drinfeld modules}
\jo{Internat. Math. Res. Notices}
\vo{53}
\yr{2005}
\pp{3273--3307}



\bibitem{JNT-2}
\au{D.~Ghioca}
\ti{The Lehmer inequality and the Mordell-Weil theorem for Drinfeld modules}
\jo{J. Number Theory}
\vo{122}
\yr{2007}
\pp{37--68}


\bibitem{TAMS}
\au{D.~Ghioca}
\ti{The isotrivial case in the Mordell-Lang theorem}
\jo{Trans. Amer. Math. Soc.}
\vo{360}
\yr{2008}
\pp{3839--3856}


\bibitem{equidrin}
\au{D.~Ghioca and T.~J.~Tucker}
\ti{Equidistribution and integrality for Drinfeld modules}
\jo{Tran. Amer. Math. Soc.}
\vo{350}
\yr{2008}
\pp{4863--4887}

\begin{comment}
\bibitem{compositio}
\au{D.~Ghioca and T.~J.~Tucker}
\ti{A dynamical version of the Mordell-Lang conjecture for the additive group}
\jo{Compositio Math.}
\vo{164}
\yr{2008}
\pp{304--316}
\end{comment}

\bibitem{Hr}
\au{E.~Hrushovski}
\ti{The Mordell-Lang conjecture for function fields}
\jo{J. Amer. Math. Soc.}
\vo{9}
\yr{1996}
\pp{667--690}


\bibitem{Lang}
\au{S.~Lang}
\ti{Fundamentals of Diophantine Geometry}
Springer-Verlag, New York, 1983.

\begin{comment}
\bibitem{F-sets}
\au{R.~Moosa and T.~Scanlon}
\ti{$F$-structures and integral points on semiabelian varieties over finite fields}
\jo{Amer. J. Math.}
\vo{126}
\yr{2004}
\pp{473--522}
\end{comment}

\bibitem{PZ}
\au{A.~Pillay and M.~Ziegler}
\ti{Jet spaces of varieties over differential and difference fields}
\jo{Selecta Math. (N.S.)}
\vo{9}
\yr{2003}
\pp{579--599}

\bibitem{Poonen}
\au{B.~Poonen}
\ti{Local height functions and the Mordell-Weil theorem for Drinfeld modules}
\jo{Compositio Math.}
\vo{97}
\yr{1995}
\pp{349--368}

\bibitem{Poonen-Voloch}
\au{B.~Poonen and J.~F.~Voloch}
\ti{The Brauer-Manin obstruction for subvarieties of abelian varieties over function fields}
\jo{Ann. Math.}
\vo{171}
\yr{2010}
\pp{511--532}

\bibitem{Sc}
\au{T.~Scanlon}
\ti{Model theory and diophantine geometry}
Arizona Winter School, 2003, {\tt http://math.arizona.edu/{$\sim$}swc/oldaws/03Notes.html }

\bibitem{Serre}
\au{J.~P.~Serre}
\ti{Lectures on the Mordell-Weil theorem}
Translated from the French and edited by Martin Brown from notes by Michel Waldschmidt. Aspects of Mathematics, E15. frieds. Vieweg \& Sohn, Braunschweig, 1989.

\bibitem{Sun}
\au{C.-L. Sun}
\ti{Ad\`{e}lic points of subvarieties of isotrivial semi-abelian varieties over a global field of positive characteristic}
preprint, {\tt arXiv:1005.4998}.

\begin{comment}
\bibitem{Vojta}
\au{P.~Vojta}
\ti{Integral points on subvarieties of semiabelian varieties. I}
\jo{Invent. Math.}
\vo{126}
\yr{1996}
\pp{133--181}
\end{comment}

\bibitem{Wang}
\au{J.~T.-Y.~Wang}
\ti{The Mordell-Weil theorems for Drinfeld modules over finitely generated function fields}
\jo{Manuscripta Math.}
\vo{106}
\yr{2001}
\pp{305--314}

\end{thebibliography}
\end{document}